\numberwithin{equation}{section}
\theoremstyle{plain}
\newtheorem{thm}{Theorem}[section]
\newtheorem{lem}[thm]{Lemma}
\newtheorem{prop}[thm]{Proposition}
 \theoremstyle{definition}
\newtheorem{defn}[thm]{Definition}
\newtheorem{rem}[thm]{Remark}
\newtheorem{notn}[thm]{Notation}
\newcommand{\mb}[1]{\mathbb{#1}}
\newcommand{\mc}[1]{\mathcal{#1}}
\newcommand{\mr}[1]{\mathrm{#1}}
\newcommand{\vphi}{\varphi}
\newcommand{\id}{\operatorname{id}}
\newcommand{\Spec}{\operatorname{Spec}}
\newcommand{\GW}{\mathrm{GW}}
\newcommand{\Tr}{\mathrm{Tr}}
\newcommand{\Hom}{\mathcal{H}om}
\newcommand{\ind}{\mathrm{ind}}
\newcommand{\PGr}{\mathbb{G}\mathrm{r}}
\newcommand{\rank}{\operatorname{rank}}
\newcommand{\Sym}{\mathrm{Sym}}
\newcommand{\spn}{\operatorname{span}}
\newcommand{\Jac}{\operatorname{Jac}}
\renewcommand{\d}{\mathrm{d}}
\newcommand{\T}{\mathrm{T}}
\newcommand{\Wr}{\operatorname{Wr}}
\newcommand{\II}{\sbox0{II}\dimen0=\dimexpr\wd0+1pt\relax
  \makebox[\dimen0]{\rlap{\vrule width\dimen0 height 0.07ex depth 0.07ex}%
    \rlap{\vrule width\dimen0 height\dimexpr\ht0+0.07ex\relax 
            depth\dimexpr-\ht0+0.07ex\relax}%
    \kern.5pt \textnormal{II}\kern.5pt}}
\newcommand{\III}{\sbox0{III}\dimen0=\dimexpr\wd0+1pt\relax
  \makebox[\dimen0]{\rlap{\vrule width\dimen0 height 0.07ex depth 0.07ex}%
    \rlap{\vrule width\dimen0 height\dimexpr\ht0+0.07ex\relax 
            depth\dimexpr-\ht0+0.07ex\relax}%
    \kern.5pt \textnormal{III}\kern.5pt}}
\begin{document}
%%%%%%%%%%%%%%%%%%%%%%%%%%%%%%%%%%%%%%%%
\title{Quadratic counts of highly tangent lines to hypersurfaces}

\author[McKean]{Stephen McKean}
\address{Department of Mathematics \\ Brigham Young University} 
\email{mckean@math.byu.edu}
\urladdr{shmckean.github.io}

\author[Muratore]{Giosu{\`e} Muratore}
\address{CEMS.UL (University of Lisbon), and  
COPELABS/DEISI (Lus\'ofona University)}
\email{muratore.g.e@gmail.com}
\urladdr{sites.google.com/view/giosue-muratore}

\author[Ong]{Wern Juin Gabriel Ong}
\address{Department of Mathematics \\ Universit\"at Bonn}
\email{wgabrielong@uni-bonn.de}
\urladdr{wgabrielong.github.io}

\subjclass[2020]{Primary: 14N10. Secondary: 14G27.}
%%%%%%%%%%%%%%%%%%%%%%%%%%%%%%%%%%%%%%%%

\begin{abstract}
We give two geometric interpretations for the local type of a line that is highly tangent to a hypersurface in a single point. One interpretation is phrased in terms of the Wronski map, while the other interpretation relates to the fundamental forms of the hypersurface. These local types are the local contributions of a quadratic form-valued Euler number that depends on a choice of orientation.
\end{abstract}

\maketitle

\section{Introduction}
Over an algebraically closed field, the number of roots of a polynomial, when counted with multiplicity, is given by its degree. In contrast, the number of real roots is not determined by the degree. In order to give a count of real roots that depends only on the degree of the polynomial, the roots must be counted with a \emph{sign} rather than a multiplicity (see Figure~\ref{fig:polynomials}).

\begin{figure}[h]
\begin{tikzpicture}[scale=.75]
  \draw[->] (-2, 0) -- (2, 0);
  \draw[->] (0, -2) -- (0, 2);
  \draw[scale=0.5, domain=-2.25:2.25, smooth, variable=\x, blue, very thick] plot ({\x}, {\x*\x-2});
  \node[below,red] at (-1,0) {$-$};
  \node[below,red] at (1,0) {$+$};
\end{tikzpicture}\quad
\begin{tikzpicture}[scale=.75]
  \draw[->] (-2, 0) -- (2, 0);
  \draw[->] (0, -2) -- (0, 2);
  \draw[scale=0.5, domain=-1.75:1.75, smooth, variable=\x, blue, very thick] plot ({\x}, {2*\x*(\x-1.5)*(\x+1.5)});
  \node[above,red] at (-1,0) {$+$};
  \node[above,red] at (.25,0) {$-$};
  \node[below,red] at (1,0) {$+$};
\end{tikzpicture}
\quad
\begin{tikzpicture}[scale=.75]
  \draw[->] (-2, 0) -- (2, 0);
  \draw[->] (0, -2) -- (0, 2);
  \draw[scale=0.5, domain=-1.75:1.75, smooth, variable=\x, blue, very thick] plot ({\x}, {-2*\x*(\x-1.5)*(\x+1.5)});
  \node[below,red] at (-1,0) {$-$};
  \node[above,red] at (-.25,0) {$+$};
  \node[above,red] at (1,0) {$-$};
\end{tikzpicture}
\quad
\begin{tikzpicture}[scale=.75]
  \draw[->] (-2, 0) -- (2, 0);
  \draw[->] (0, -2) -- (0, 2);
  \draw[scale=0.5, domain=-1.75:1.75, smooth, variable=\x, blue, very thick] plot ({\x}, {1.3*\x*\x*(\x-1.5)*(\x+1.5)});
  \node[below,red] at (-1,0) {$-$};
  \node[above,red] at (-.2,0) {$0$};
  \node[below,red] at (1,0) {$+$};
\end{tikzpicture}
    \caption{Signed counts of real roots}\label{fig:polynomials}
\end{figure}
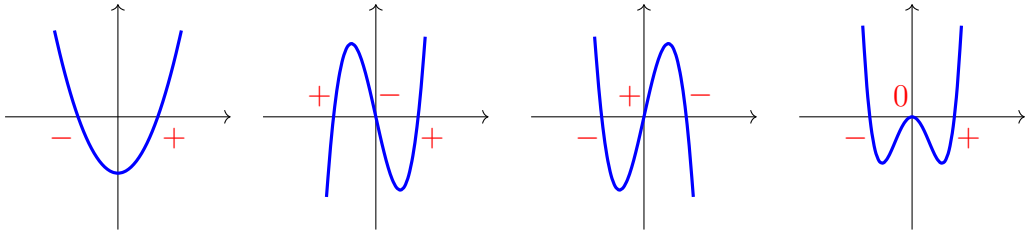

For polynomials of even degree, this signed count of real roots is always 0. For polynomials of odd degree, the signed count is $\pm 1$. This sign ambiguity can be interpreted as an artifact of non-orientability: the roots of a polynomial correspond to the intersection of two plane curves, and B\'ezout's theorem is not an orientable problem over non-closed fields \cite{MR4211099}.

By \emph{highly tangent line} we mean a line meeting a hypersurface in $\mb{P}^n$ at a point with contact order at least $2n-1$. In this article, we will investigate the geometric weight with which to count highly tangent lines to the hypersurface. Analogous to the case of real roots of odd polynomials, this enumerative problem is not orientable in many cases of interest, so the total count of highly tangent lines depends on the choice of hypersurface. Despite the lack of invariant total count, we find that this counting weight admits two interesting descriptions.

\begin{thm}[Wronskian interpretation]\label{thm:wronskian}
    Let $k$ be a field. Let $X=\mb{V}(F)$ be a smooth, general hypersurface in $\mb{P}^n_k$ of degree $d\geq 2n-1$. Let $\Phi$ be the flag variety of pointed lines in $\mb{P}^n_k$, and let $\beta\colon \Phi\to\mb{P}^n_k$ send a pointed line to its marked point.
    
    Let $L$ be a line such that the intersection multiplicity of $L$ and $X$ at $p$ is $2n-1$. Then the local index associated to $L$ is given by the Wronskian determinant (with respect to the local parameter of $L$ at $p$) of the gradient $\nabla\beta^*F$.
\end{thm}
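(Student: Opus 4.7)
The plan is to realize the set of highly tangent lines as the zero locus of a natural section $\sigma$ of a rank-$(2n-1)$ vector bundle $\mc{E}$ on $\Phi$, and then apply the local Scheja--Storch / Eisenbud--Khimshiashvili--Levine formula to compute the local index at the isolated zero $(p,L)$.

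First, I would fix local coordinates $(t,s_1,\ldots,s_{2n-2})$ on $\Phi$ near $(p,L)$, with $t$ a local parameter along the line $L$ at $p$ (so that $\partial_t$ maps under $\beta$ to a tangent vector of $L$) and $s_1,\ldots,s_{2n-2}$ parameters moving $L$ transversely through a nearby point. The contact-order-$(2n-1)$ condition at a pointed line $(p',L')$ is equivalent to the vanishing of the $2n-1$ derivatives $\partial_t^k(\beta^*F)$ for $k=0,1,\ldots,2n-2$, since these are precisely the first $2n-1$ Taylor coefficients of $F|_{L'}$ at $p'$. These derivatives assemble into a section $\sigma$ of the rank-$(2n-1)$ bundle $\mc{E}$ of $(2n-2)$-jets of $F$ along the marked line, and the zero scheme of $\sigma$ is exactly the locus of highly tangent pointed lines.

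Next, by the local form of the quadratic Euler number, at an isolated zero of $\sigma$ whose Jacobian is a unit the local index is the rank-one quadratic form $\langle\det J_\sigma\rangle$, with $J_\sigma$ the Jacobian matrix of $\sigma$ in the chosen local trivialization. A direct calculation gives that the $(k+1,i)$-entry of $J_\sigma$ is $\partial_{z_i}\partial_t^k f=\partial_t^k\partial_{z_i}f$, where $f=\beta^*F$, $z_1=t$, and $z_i=s_{i-1}$ for $i\geq 2$. Setting $g_i=\partial_{z_i}f$, the resulting matrix $(\partial_t^k g_i)_{k,i}$ is exactly the Wronskian matrix with respect to $t$ of the $2n-1$ components of $\nabla\beta^*F$, so that $\det J_\sigma=\Wr_t(\nabla\beta^*F)$ as claimed.

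The main obstacle I anticipate is intrinsic rather than computational. First, one must pin down the bundle $\mc{E}$ and the trivialization used to identify its fiber with affine $(2n-1)$-space: this is where the orientation-dependence mentioned in the abstract enters, and one must verify that the trivialization used for the Jacobian computation is the one naturally induced by the gradient $\nabla\beta^*F$. Second, one must check that smoothness of $X$, genericity of $F$, and contact order exactly $2n-1$ together force $\det J_\sigma$ to be a unit in the local ring at $(p,L)$, so that the simplest form of the local index formula applies. Once these points are settled, the linear-algebra step above identifying the Jacobian with the Wronskian is immediate.
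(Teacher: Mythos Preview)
Your proposal is correct and follows essentially the same approach as the paper: construct the section $\sigma_F$ of the rank-$(2n-1)$ bundle of relative principal parts, express it in local coordinates as the tuple of successive $t$-derivatives of $\beta^*F$, and then observe that the Jacobian of this tuple is the Wronskian of $\nabla\beta^*F$ because $\nabla$ commutes with the $t$-derivatives. The one refinement the paper makes is to use Hasse derivatives $D_t^{(a)}$ rather than ordinary iterated partials $\partial_t^k$, which is what allows the argument (and the resulting Wronskian) to go through over fields of arbitrary characteristic; the commutation check $\nabla D_t^{(a)} = D_t^{(a)}\nabla$ is carried out explicitly, and the careful choice of coordinates and trivializations you flag as an obstacle is handled in the preceding section.
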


\begin{thm}[Fundamental form interpretation]\label{thm:fundamental form}
Let $X$ be a smooth, general, projective plane curve, and assume $\mathrm{char}(k)\neq2,3$. Let $\inf(X)$ denote the locus of inflectional points on $X$. Let $\II$ and $\III$ denote the second and third fundamental forms of $X$, respectively. For each $p\in\inf(X)$, we have
\[\ind_p\II=\Tr_{k(p)/k}\langle 3\cdot\III(p)\rangle.\]
Moreover, if $X$ is of even degree $d$ and admits a theta characteristic, then
\[\sum_{p\in\inf(X)}\Tr_{k(p)/k}\langle\III(p)\rangle=\frac{3d(d-2)}{2}\mb{H},\]
where $\mb{H}:=\langle1\rangle+\langle-1\rangle$.
\end{thm}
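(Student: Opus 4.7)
\emph{Proof plan.}
The plan is to reduce the pointwise identity to Theorem~\ref{thm:wronskian} by a local coordinate computation, and to deduce the ``moreover'' identity by recognizing the sum as a $\GW(k)$-valued Euler class which, under the theta characteristic hypothesis, is manifestly hyperbolic.

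For the pointwise identity, I would first apply Theorem~\ref{thm:wronskian} in the case $n = 2$ to the tangent line $L = T_pX$ at an inflection point $p$: the contact order $2n - 1 = 3$ is precisely the inflection condition, so $\ind_p\II$ equals the transfer $\Tr_{k(p)/k}$ of the Wronskian determinant of $\nabla\beta^*F$ with respect to a local parameter of $L$ at $p$. I would then compute this Wronskian in an affine chart with $p$ at the origin, $T_pX = \{y = 0\}$, and $F$ normalized so that $F_y(p) = 1$; the inflection condition forces $F|_{y = 0} = \alpha x^3 + O(x^4)$ for some $\alpha \in k(p)^\times$. A direct Taylor expansion identifies the leading term of the Wronskian with $3\alpha$ up to a nonzero square, while the classical normalization of $\III$ identifies $\III(p)$ with $\alpha$ modulo squares (coming from local trivializations of $\Omega_X$ and $N_{X/\mb{P}^2}$). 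Matching these and applying the transfer yields the identity.

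For the ``moreover'' statement I would view $\II$ as a global section of the line bundle $\mc{L} := \Sym^2\Omega_X \otimes N_{X/\mb{P}^2}$ on $X$, which by adjunction is isomorphic to $\mc{O}_X(3d - 6)$ and has degree $3d(d - 2)$. Under the hypothesis that $d$ is even and $X$ admits a theta characteristic $\theta$ (so $\theta^{\otimes 2} = \omega_X = \mc{O}_X(d - 3)$), the bundle $\mc{L}$ is relatively oriented via
\[\mc{L} \otimes \omega_X^\vee \cong \mc{O}_X(2d - 3) \cong \mc{O}_X(d/2)^{\otimes 2} \otimes \theta^{\otimes 2} \cong \bigl(\mc{O}_X(d/2) \otimes \theta\bigr)^{\otimes 2}.\]
By Poincar\'e--Hopf in $\GW(k)$ it then follows that $\sum_{p \in \inf(X)} \ind_p\II = e(\mc{L})$.

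The final step is to evaluate $e(\mc{L})$. Since $d$ is even, $\mc{L} \cong \mc{N}^{\otimes 2}$ with $\mc{N} := \mc{O}_X((3d - 6)/2)$ of degree $3d(d - 2)/2$. For a generic section $\sigma \in H^0(X, \mc{N})$, the squared section $\sigma^{\otimes 2} \in H^0(X, \mc{L})$ has double zeros precisely at the simple zeros of $\sigma$; a local Scheja--Storch (Eisenbud--Khimshiashvili--Levine) computation on $k[t]/(t^2)$ shows that any double zero of a line-bundle section contributes $\mb{H}$ to the local index, so $e(\mc{L}) = \tfrac{3d(d - 2)}{2}\,\mb{H}$. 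Combining with the pointwise identity, Frobenius reciprocity factors out $\langle 3\rangle$ from each transfer, which is absorbed by $\mb{H}$ via $\langle 3\rangle\mb{H} = \mb{H}$, yielding the stated formula. The chief obstacle will be the local calibration in the first identity: carefully tracking square classes and verifying that the relative orientation used to define $e(\mc{L})$ agrees with the one implicit in Theorem~\ref{thm:wronskian}.
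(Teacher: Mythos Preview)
Your reduction of the pointwise identity to Theorem~\ref{thm:wronskian} has a genuine gap. Theorem~\ref{thm:wronskian} computes $\ind_{(L,p)}\sigma_F$, the local index of the section $\sigma_F$ of the rank~$3$ bundle $\mc{E}_3$ on the $3$-dimensional flag variety $\Phi_{1,2}$, relative to a chosen orienting divisor (recall $\mc{E}_m\to\Phi$ is never relatively orientable for $r=1$). The quantity $\ind_p\II$ in Theorem~\ref{thm:fundamental form} is the local index of the section $\II$ of the line bundle $\mc{F}=N_{X/\mb{P}^2}\otimes\Sym^2\Omega_X$ on the curve $X$. These are local indices of different sections of different bundles on varieties of different dimension, with different orientation data; you assert they agree without justification. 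Your Taylor expansion computes the $3\times 3$ Wronskian, which is the former invariant, not the latter. Matching these up would require comparing the Gauss-type embedding $X\hookrightarrow\Phi_{1,2}$, $p\mapsto(T_pX,p)$, with the bundles, sections, and orientations on both sides --- this is substantially more than ``local calibration'' of square classes.

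The paper avoids this entirely. It computes $\ind_p\II$ directly on $X$: in a local parameter $z$, the Taylor series map identifies $\II$ with the second Hasse derivative $D_z^{(2)}b$ (times $\d z^{\otimes 2}$) and $\III$ with $D_z^{(3)}b$ (times $\d z^{\otimes 3}$). Since $X$ is a curve and $\mc{F}$ is a line bundle, the Jacobian of $\II$ at $p$ is just $\tfrac{\partial}{\partial z}D_z^{(2)}b=3D_z^{(3)}b$, hence $\Jac(\II)|_p=3\cdot\III(p)$. This is a one-line derivative identity once the local description of the fundamental forms is in hand, and it bypasses Theorem~\ref{thm:wronskian} completely.

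For the ``moreover'' part your strategy is sound and gives a genuine alternative. Two small points: the paper's convention for relative orientability is that $\det V\otimes\omega_X$ (not $\det V\otimes\omega_X^\vee$) be a square, so your displayed computation should involve $\mc{L}\otimes\omega_X\cong\mc{O}_X(4d-9)$; and the paper obtains hyperbolicity of $e(\mc{F})$ by citing the general fact that Euler numbers of vector bundles over odd-dimensional schemes are hyperbolic, rather than via your squared-section argument. Your argument works too and is more hands-on; the paper's is shorter.
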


We will make these theorems more precise in Sections~\ref{sec:wronskian} and~\ref{sec:fundamental form}, respectively.

Our work fits into the \emph{enriched enumerative geometry} program, which consists of quadratic form-valued counts of geometric objects. These counts are usually valid over more general base fields than just algebraically closed or real fields. Quadratic forms arise as a result of the tools necessary to work over arbitrary fields --- motivic homotopy theory provides suitable substitutes for Euler numbers and the local Brouwer degree, but these invariants are valued in the Grothendieck--Witt ring of the base field rather than in $\mb{Z}$.

\subsection{Outline}
The enumerative problem of counting highly tangent (projective) linear spaces to hypersurfaces can be formulated in terms of a bundle of principal parts over a flag variety of pointed linear spaces (see e.g.~\cite[\S 11.1]{3264}). In Section~\ref{sec:parameter space}, we will recall the construction of this flag variety, which serves as the parameter space of objects that we will count. We will also discuss standard coordinates on the flag variety. In Section~\ref{sec:bundle}, we will recall the bundle of principal parts, as well as convenient local trivializations of this bundle. We will also make a few remarks about Euler numbers in this section.

As previously mentioned, the problem we are treating is not orientable. To resolve this, we need to choose an orienting divisor in our parameter space, which we do in Section~\ref{sec:orienting divisors}. We then prove Theorem~\ref{thm:wronskian} in Section~\ref{sec:wronskian}.

An alternative formulation of the problem of counting highly tangent lines involves fundamental forms. An advantage of this formulation is that the problem is relatively orientable in cases where the bundle of relative principal parts over the flag variety is not relatively orientable. We will set up the necessary background and prove Theorem~\ref{thm:fundamental form} in Section~\ref{sec:fundamental form}.

Note that most computations of local indices in the enriched enumerative geometry literature involve a parameter space whose Nisnevich coordinates are actually isomorphisms, because the parameter space is covered by affine spaces. In contrast, our computation of the relevant Euler class in the second approach uses non-trivial Nisnevich coordinates. So our second approach adds a bit of novelty to the literature.

\subsection*{Data management statement}
The data that support the findings of this study are openly available in \texttt{highly\textunderscore{}tangent} at \url{https://github.com/wgabrielong/highly_tangent/tree/main}.

\subsection*{Conflict of interest statement}
The authors declare no conflict of interest related to this study.

\subsection*{Acknowledgements}
We thank Ethan Cotterill, Steven Kleiman, Marc Levine, Wenbo Niu, Sabrina Pauli, and Felipe Voloch for useful discussions. SM received support from an NSF MSPRF grant (DMS-2202825). GM is a member of GNSAGA (INdAM), and is supported by FCT - Funda\c{c}\~{a}o para a Ci\^{e}ncia e a Tecnologia, under the project: UID/04561/2025. WO thanks DJ Merril for help with implementing several of the computations in \cite{GabRep}.

\section{The parameter space}\label{sec:parameter space}
Consider the Grassmannian $\PGr(r,n)$ of projective $r$-planes in $\mb{P}^n$, which can also be thought of as the space of affine $(r+1)$-planes through the origin in $\mb{A}^{n+1}$. Our parameter space is a flag variety that naturally occurs as an incidence variety over $\PGr(r,n)$.

\begin{defn}\label{def:Phi}
    Define the flag variety of pointed planes as the incidence variety
    \[\Phi_{r,n}:=\{(H,p)\in\PGr(r,n)\times\mb{P}^n:p\in H\}.\]
\end{defn}

Note that the variety $\Phi_{r,n}$ is isomorphic to the projective bundle $\pi\colon \mb{P}\mc{S}\to\mb{G}(r,n)$, where $\mc{S}\to\mb{G}(r,n)$ is the tautological bundle over the affine Grassmannian. We will also use $\pi$ when discussing the pair of projections
\[\begin{tikzcd}[sep=small]
    & \Phi_{r,n}\arrow[dl,"\pi"']\arrow[dr,"\beta"] &\\
    \PGr(r,n) & & \mb{P}^n,
\end{tikzcd}\]
which should not cause confusion as $\Phi_{r,n}\cong\mb{P}\mc{S}$.% and $\PGr(r,n)\cong\mb{G}(r,n)$.

Before discussing coordinates for $\Phi_{r,n}$, we mention a few facts about $\Phi_{r,n}$ that we will need later. All of these facts are standard computations for projective bundles.

\begin{lem}\label{lem:dim Phi}
    We have $\dim\Phi_{r,n}=r(n-r)+n$.
\end{lem}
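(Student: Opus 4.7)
The plan is to use the projective bundle structure already given in the excerpt, namely the isomorphism $\Phi_{r,n} \cong \mb{P}\mc{S}$ over the Grassmannian, together with the standard dimension of the Grassmannian. Since $\mc{S}$ is the tautological bundle of rank $r+1$ (a projective $r$-plane in $\mb{P}^n$ corresponds to an $(r+1)$-dimensional linear subspace of $\mb{A}^{n+1}$), its projectivization $\mb{P}\mc{S} \to \PGr(r,n)$ is a Zariski-locally trivial $\mb{P}^r$-bundle. Thus
\[\dim \Phi_{r,n} \;=\; \dim \PGr(r,n) + r.\]

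The second ingredient is the classical formula $\dim \PGr(r,n) = (r+1)(n-r)$, which follows by identifying $\PGr(r,n)$ with the affine Grassmannian of $(r+1)$-planes in $\mb{A}^{n+1}$ and writing down the usual affine chart (the coordinates of a row-reduced $(r+1)\times(n+1)$ matrix after selecting a pivot submatrix). Substituting,
\[\dim \Phi_{r,n} \;=\; (r+1)(n-r) + r \;=\; r(n-r) + (n-r) + r \;=\; r(n-r) + n,\]
as desired.

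If one wished to avoid invoking the isomorphism $\Phi_{r,n} \cong \mb{P}\mc{S}$, an equally quick alternative would be to use the other projection $\beta\colon \Phi_{r,n}\to \mb{P}^n$ and verify that the fiber over a point $p$ is isomorphic to $\PGr(r-1,n-1)$ (the $r$-planes through $p$ correspond to $(r-1)$-planes in the projectivization of $T_p\mb{P}^n$), whose dimension is $r(n-r)$. Then $\dim \Phi_{r,n} = n + r(n-r)$. There is no real obstacle here; the statement is purely bookkeeping about Grassmannian dimensions, and either projection produces the answer immediately.
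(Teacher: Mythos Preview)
Your proof is correct and takes essentially the same approach as the paper: both use the projective bundle isomorphism $\Phi_{r,n}\cong\mb{P}\mc{S}$ and compute the dimension as $\dim\PGr(r,n)+(\rank\mc{S}-1)=(r+1)(n-r)+r$. Your additional remark about the alternative via the fiber of $\beta$ is a nice complement but not in the paper's proof.
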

\begin{proof}
    Since $\Phi_{r,n}\cong\mb{P}\mc{S}$, we can compute its dimension as a projective bundle over $\mb{G}(r,n)$. This gives us
    \begin{align*}
        \dim\mb{P}\mc{S}&=\dim\mb{G}(r,n)+\rank\mc{S}-1\\
        &=(r+1)(n-r)+r+1-1\\
        &=r(n-r)+n.\qedhere
    \end{align*}
\end{proof}

We fix the following notation for the subsequent three lemmas.

\begin{notn}
    Let $\Phi:=\Phi_{r,n}$ and $G:=\mb{G}(r,n)$. Let $\Omega_{\Phi/G}$ denote the relative cotangent bundle of $\pi\colon \Phi\to G$. Let $\omega_X$ denote the canonical bundle of a scheme $X$.
\end{notn}

\begin{lem}\label{lem:canonical bundle Phi}
    We have $\omega_\Phi\cong\mc{O}_\Phi(-r-1)\otimes\pi^*\mc{O}_G(-n)$.
\end{lem}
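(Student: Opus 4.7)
The plan is to separate the computation into the relative piece $\omega_{\Phi/G}$ and the base piece $\pi^*\omega_G$ via the relative cotangent short exact sequence
\[
0 \to \pi^*\Omega_G \to \Omega_\Phi \to \Omega_{\Phi/G} \to 0,
\]
whose top exterior power yields $\omega_\Phi \cong \omega_{\Phi/G} \otimes \pi^*\omega_G$. Both factors are standard computations, so the work is essentially bookkeeping about which convention for $\mb{P}\mc{S}$ and $\mc{O}_G(1)$ is in force.

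For the relative piece, since $\Phi \cong \mb{P}\mc{S}$ is the classical projectivization (parametrizing lines in fibers of the rank $r+1$ tautological subbundle $\mc{S}$), the relative Euler sequence
\[
0 \to \mc{O}_\Phi \to \pi^*\mc{S} \otimes \mc{O}_\Phi(1) \to T_{\Phi/G} \to 0
\]
gives, upon taking determinants, $\det T_{\Phi/G} \cong \pi^*\det\mc{S} \otimes \mc{O}_\Phi(r+1)$, hence
\[
\omega_{\Phi/G} \cong \mc{O}_\Phi(-r-1) \otimes \pi^*\det\mc{S}^\vee \cong \mc{O}_\Phi(-r-1) \otimes \pi^*\mc{O}_G(1),
\]
using the Plücker identification $\mc{O}_G(1) = \det\mc{S}^\vee$.

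For the base piece, I would invoke the standard presentation $T_G \cong \mc{S}^\vee \otimes \mc{Q}$, where $\mc{Q}$ is the tautological quotient of rank $n-r$. Taking determinants of this tensor product of bundles of ranks $r+1$ and $n-r$ gives
\[
\det T_G \cong (\det\mc{S}^\vee)^{n-r} \otimes (\det\mc{Q})^{r+1}.
\]
Using the tautological identity $\det\mc{S} \otimes \det\mc{Q} \cong \mc{O}_G$ to replace $\det\mc{Q}$ by $\det\mc{S}^\vee$, this collapses to $(\det\mc{S}^\vee)^{n+1} = \mc{O}_G(n+1)$, i.e.\ $\omega_G \cong \mc{O}_G(-n-1)$.

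Combining, $\omega_\Phi \cong \mc{O}_\Phi(-r-1) \otimes \pi^*\mc{O}_G(1) \otimes \pi^*\mc{O}_G(-n-1) = \mc{O}_\Phi(-r-1) \otimes \pi^*\mc{O}_G(-n)$, as desired. The only real obstacle is a potential sign or convention error: one has to ensure that the Euler sequence is written in the form appropriate to $\mb{P}\mc{S}$ parametrizing \emph{lines} in $\mc{S}$ (not hyperplanes), and that the sign of $\mc{O}_G(1)$ matches the Plücker line bundle $\det\mc{S}^\vee$; with these conventions pinned down, every step is a one-line exterior-power computation.
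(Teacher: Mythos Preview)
Your proof is correct and follows essentially the same approach as the paper: the paper simply quotes the general formula $\omega_{\mb{P}V}\cong\mc{O}_{\mb{P}V}(-v)\otimes\pi^*\det V^\vee\otimes\pi^*\omega_X$ and the standard facts $\det\mc{S}\cong\mc{O}_G(-1)$, $\omega_G\cong\mc{O}_G(-n-1)$, whereas you unwind these via the relative Euler sequence and the identification $T_G\cong\mc{S}^\vee\otimes\mc{Q}$. The substance is identical; you have just filled in the details the paper leaves as standard.
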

\begin{proof}
    For any vector bundle $V\to X$ of rank $v$ with associated projective bundle $\pi\colon \mb{P}V\to X$, there is an isomorphism
    \[\omega_{\mb{P}V}\cong\mc{O}_{\mb{P}V}(-v)\otimes\pi^*\det V^\vee\otimes\pi^*\omega_X.\]
    The result now follows from $\det\mc{S}\cong\mc{O}_G(-1)$ and $\omega_G\cong\mc{O}_G(-n-1)$, which are standard facts about Grassmannians and their tautological bundles.
\end{proof}

\begin{lem}\label{lem:rel cotangent bundle Phi}
    We have $\rank\Omega_{\Phi/G}=r$.
\end{lem}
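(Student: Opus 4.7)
The plan is to leverage the identification $\Phi_{r,n} \cong \mathbb{P}\mathcal{S}$ already established in Definition~\ref{def:Phi} and use the standard computation of the relative cotangent bundle of a projective bundle. Since $\Omega_{\Phi/G}$ is a vector bundle, its rank equals the relative dimension of $\pi \colon \Phi \to G$, so it suffices to compute the dimension of the fibers of $\pi$.

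First I would recall that $\mathcal{S}$ is the tautological subbundle over the affine Grassmannian $\mathbb{G}(r,n)$, parametrizing affine $(r+1)$-planes through the origin in $\mathbb{A}^{n+1}$, so $\mathrm{rank}\,\mathcal{S} = r+1$. The fiber of $\pi$ over a point $H \in G$ is canonically identified with $\mathbb{P}(\mathcal{S}_H) \cong \mathbb{P}^r$, which has dimension $r$. Therefore $\pi$ is smooth of relative dimension $r$, and $\Omega_{\Phi/G}$ is locally free of rank $r$.

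Alternatively, and perhaps more efficiently, one can cite the relative Euler sequence for the projective bundle $\pi \colon \mathbb{P}\mathcal{S} \to G$:
\[
0 \to \Omega_{\Phi/G} \to \pi^*\mathcal{S}^\vee \otimes \mathcal{O}_\Phi(-1) \to \mathcal{O}_\Phi \to 0,
\]
from which we read off $\mathrm{rank}\,\Omega_{\Phi/G} = \mathrm{rank}\,\mathcal{S} - 1 = r$. This is the same argument packaged differently, and it also matches the calculation in the proof of Lemma~\ref{lem:canonical bundle Phi} after taking determinants.

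There is no real obstacle here: the computation is a direct application of standard projective bundle facts, and the only thing to be careful about is the convention that $\mathcal{S}$ has rank $r+1$ (since $\mathbb{G}(r,n)$ parametrizes projective $r$-planes, i.e.\ affine $(r+1)$-planes) rather than rank $r$, which would otherwise give the wrong answer.
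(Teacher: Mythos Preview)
Your proposal is correct and takes essentially the same approach as the paper: both arguments identify $\rank\Omega_{\Phi/G}$ with the relative dimension of the smooth morphism $\pi\colon\Phi\to G$. The only cosmetic difference is that the paper computes this relative dimension as $\dim\Phi-\dim G=r(n-r)+n-(r+1)(n-r)=r$ via Lemma~\ref{lem:dim Phi}, whereas you read it off directly from the fiber $\mathbb{P}(\mathcal{S}_H)\cong\mathbb{P}^r$ (or from the relative Euler sequence).
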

\begin{proof}
    Since $\Phi$ and $G$ are both smooth, we have $\rank\Omega_\Phi=\dim\Phi$ and $\rank\Omega_G=\dim{G}$. The rank of the relative cotangent bundle is thus
    \begin{align*}
        \rank\Omega_{\Phi/G}&=\rank\Omega_\Phi-\rank\Omega_G\\
        &=\dim\Phi-\dim G\\
        &=r(n-r)+n-(r+1)(n-r)\\
        &=r.\qedhere
    \end{align*}
\end{proof}

\begin{lem}\label{lem:det Omega}
    We have $\det\Omega_{\Phi/G}\cong\mc{O}_\Phi(-r-1)\otimes\pi^*\mc{O}_G(1)$.
\end{lem}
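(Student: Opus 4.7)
The plan is to deduce this from information already assembled for the previous two lemmas, so that essentially no new geometric input is needed. Since both $\Phi$ and $G$ are smooth, the relative cotangent sequence
\[0\to\pi^*\Omega_G\to\Omega_\Phi\to\Omega_{\Phi/G}\to0\]
is exact, and taking top exterior powers gives $\det\Omega_{\Phi/G}\cong\omega_\Phi\otimes\pi^*\omega_G^\vee$. Lemma~\ref{lem:canonical bundle Phi} already identifies $\omega_\Phi\cong\mc{O}_\Phi(-r-1)\otimes\pi^*\mc{O}_G(-n)$, and the standard fact $\omega_G\cong\mc{O}_G(-n-1)$ that was invoked in its proof gives $\pi^*\omega_G^\vee\cong\pi^*\mc{O}_G(n+1)$. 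Substituting and combining the two twists from $G$ immediately yields $\mc{O}_\Phi(-r-1)\otimes\pi^*\mc{O}_G(1)$.

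Alternatively, I would argue directly from the relative Euler sequence. Since $\Phi\cong\mb{P}\mc{S}$ with $\mc{S}$ of rank $r+1$, and $\mc{O}_\Phi(-1)$ is the tautological subbundle of $\pi^*\mc{S}$ under the convention that fibers of $\pi$ are lines in fibers of $\mc{S}$, the relative Euler sequence reads
\[0\to\Omega_{\Phi/G}\to\pi^*\mc{S}^\vee\otimes\mc{O}_\Phi(-1)\to\mc{O}_\Phi\to0.\]
Taking determinants of the rank-$(r+1)$ middle term via $\det(E\otimes L)=\det E\otimes L^{\otimes\rank E}$ and using $\det\mc{S}^\vee\cong\mc{O}_G(1)$ produces $\pi^*\mc{O}_G(1)\otimes\mc{O}_\Phi(-r-1)$, matching the first approach.

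No step is a real obstacle; both derivations are one-line bookkeeping exercises once the Euler sequence is written down. The only point requiring care is convention: one must keep straight that $\mb{P}\mc{S}$ here parametrizes lines in $\mc{S}$ (so $\mc{O}_\Phi(-1)\hookrightarrow\pi^*\mc{S}$) and that $\det\mc{S}\cong\mc{O}_G(-1)$ rather than $\mc{O}_G(1)$. Since these are exactly the conventions implicit in Lemma~\ref{lem:canonical bundle Phi}, consistency is automatic, and the equivalence of the two approaches offers a built-in sanity check on the sign of the twist over $G$.
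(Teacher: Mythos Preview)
Your first approach is exactly the paper's argument: use the short exact sequence relating $\Omega_{\Phi/G}$, $\Omega_\Phi$, and $\pi^*\Omega_G$ (the paper writes the dual tangent version and then dualizes, but this is the same computation), identify $\det\Omega_{\Phi/G}\cong\omega_\Phi\otimes\pi^*\omega_G^\vee$, and plug in Lemma~\ref{lem:canonical bundle Phi} together with $\omega_G\cong\mc{O}_G(-n-1)$. Your alternative via the relative Euler sequence for $\mb{P}\mc{S}$ is a nice independent check but not needed.
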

\begin{proof}
    By the dual relative Euler sequence
    \[0\to \pi^*\Omega_G\to\Omega_\Phi\to\Omega_{\Phi/G}\to 0,\]
    we have $\det\Omega_{\Phi/G}\cong\omega_\Phi\otimes\pi^*\omega_G^\vee$. Applying Lemma~\ref{lem:canonical bundle Phi}, we find
    \begin{align*}
        \det\Omega_{\Phi/G}&\cong\omega_\Phi\otimes\pi^*\omega_G^\vee\\
        &\cong\mc{O}_\Phi(-r-1)\otimes\pi^*\mc{O}_G(-n)\otimes\pi^*\mc{O}_G(n+1)\\
        &\cong\mc{O}_\Phi(-r-1)\otimes\pi^*\mc{O}_G(1).\qedhere
    \end{align*}
\end{proof}

\subsection{Coordinates}\label{sec:coordinates}
We need a suitable choice of coordinates on $\Phi_{r,n}$ in order to describe the local type of a highly tangent plane to a hypersurface. We will choose coordinates on $\Phi_{r,n}$ by exploiting its structure as a projective bundle. In short, we will use the standard coordinates on $\mb{G}(r,n)$ for the base and a twist of the standard coordinates on $\mb{P}^r$ for the fibers.

Let $\{e_1,\ldots,e_{n+1}\}$ denote the standard basis for $k^{n+1}$. Every $k$-rational point of $\mb{G}(r,n)$ can be obtained as the span of a column of the form
\begin{equation}\label{eq:column}
\begin{pmatrix}
    \tilde{x}_{1,1} & \cdots & \tilde{x}_{1,n+1}\\
    \tilde{x}_{2,1} & \cdots & \tilde{x}_{2,n+1}\\
    \vdots & \ddots & \vdots\\
    \tilde{x}_{r+1,1} & \cdots & \tilde{x}_{r+1,n+1}
\end{pmatrix}
\begin{pmatrix}
e_1 \\ e_2 \\ \vdots \\ e_{n+1}
\end{pmatrix}.\end{equation}
This description generalizes beyond the $k$-points of $\mb{G}(r,n)$ and gives natural coordinates for the space. The standard open cover of $\mb{G}(r,n)$ is given by taking non-vanishing $(r+1)\times(r+1)$-minors. 

More precisely, let $1\leq i_1<\ldots<i_{r+1}\leq n+1$, and denote $I=\{i_1,\ldots,i_{r+1}\}$. Let $\{c_1,\ldots,c_{n-r}\}=\{1,\ldots,n+1\}-I$, with $c_1<\ldots<c_{n-r}$. Let $U_I\subset\mb{G}(r,n)$ denote the open set of $(r+1)$-planes such that $\det((\tilde{x}_{t,i_s})_{t,s=1}^{r+1})\neq 0$ (in the parameterization given in Equation~\ref{eq:column}). By row reducing, all such $(r+1)$-planes in $U_I$ can be obtained by taking $(\tilde{x}_{t,i_s})_{t,s=1}^{r+1}$ to be the identity matrix. We thus obtain an isomorphism
\[\vphi_I\colon U_I\to\Spec k[x_{1,1},\ldots,x_{r+1,n-r}],\]
under which a point $(p_{1,1},\ldots,p_{r+1,n-r})\in\Spec k[x_{1,1},\ldots,x_{r+1,n-r}]$ corresponds to the span of the vectors
\[\tilde{e}_{i_s}:=e_{i_s}+\sum_{j=1}^{n-r}p_{s,j}e_{c_j}\]
for $s=1,\ldots,r+1$.

\begin{defn}
    The \emph{standard coordinates} for $\mb{G}(r,n)$ are the collection of local coordinates $\{(U_I,\vphi_I)\}_I$.
\end{defn}

Now let $\varpi\colon \mc{S}\to\mb{G}(r,n)$ denote the tautological bundle. If $H\in U_I$, then $H$ is the span of $\{\tilde{e}_{i_1},\ldots,\tilde{e}_{i_{r+1}}\}$ (for some $x_{1,1},\ldots,x_{r+1,n-r}$) as described above. If we denote the coordinates of $\mb{A}^{r+1}$ by $(y_1,\ldots,y_{r+1})$, then $\varpi^{-1}(H)\cong\mb{A}^{r+1}$ parameterizes vectors of the form
\begin{equation}\label{eq:v}
v=\sum_{s=1}^{r+1}y_s\tilde{e}_{i_s},
\end{equation}
as these are precisely the vectors belonging to $H$. This gives us local coordinates $\{(\varpi^{-1}(U_I),\vphi_I\times\id)\}_I$ on the total space of $\varpi\colon \mc{S}\to\mb{G}(r,n)$. To obtain local coordinates on $\Phi_{r,n}\cong\mb{P}\mc{S}$, we take a twist of the standard open cover of $\mb{P}^r$ on the fibers.

\begin{defn}\label{def:coordinates on Phi}
    Let $U_{I,\ell}\subset\mb{P}\mc{S}$ be the open set of pairs $(H,\mr{span}(v))$, where $H\in U_I$ and the coordinate of $\tilde{e}_{i_\ell}$ in $v$ (from Equation~\ref{eq:v}) is non-zero. Let $\vphi_{I,\ell}:=\vphi_I\times\psi_\ell$, where $\psi_\ell:\{[y_1:\cdots:y_{r+1}]:y_\ell\neq 0\}\to\mb{A}^r$ are the twisted affine charts on $\mb{P}^r$ defined in \cite[p.~638]{MR4211099}. The \emph{standard coordinates} for $\Phi_{r,n}\cong\mb{P}\mc{S}$ consist of the local coordinates $\{(U_{I,\ell},\vphi_{I,\ell})\}_{I,\ell}$.
\end{defn}

\subsection{The case of lines}
As we will restrict our attention to the $r=1$ case later in this article, we now make Definition~\ref{def:coordinates on Phi} a little more explicit in this case. Let
\[(H,\spn(v))=\left(\spn\left\{\sum_{i=1}^{n+1}\tilde{x}_{1,i}e_i,\sum_{i=1}^{n+1}\tilde{x}_{2,i}e_i\right\},\spn\left\{\sum_{j=1}^2 y_j\sum_{i=1}^{n+1}\tilde{x}_{j,i}e_i\right\}\right).\]
The map $\beta\colon \Phi_{r,n}\to\mb{P}^n$ is given by
\[(H,\spn(v))\mapsto\spn\left\{\sum_{i=1}^{n+1}(y_1\tilde{x}_{1,i}+y_2\tilde{x}_{2,i})e_i\right\},\]
and the Pl\"ucker map composed with $\pi\colon \Phi_{r,n}\to\mb{G}(1,n)$ is given by
\[(H,\spn(v))\longmapsto\spn\left\{\sum_{i=1}^n\sum_{j=i+1}^{n+1}(\tilde{x}_{1,i}\tilde{x}_{2,j}-\tilde{x}_{1,j}\tilde{x}_{2,i})e_i\wedge e_j\right\}.\]
The open subsets $U_{I,\ell}$ (where $\ell\in\{1,2\}$) parameterize pairs $(H,\spn(v))$ satisfying
\begin{align*}
    \tilde{x}_{1,i_1}\tilde{x}_{2,i_2}-\tilde{x}_{1,i_2}\tilde{x}_{2,i_1}&\neq 0,\\
    y_1\tilde{x}_{1,i_\ell}+y_2\tilde{x}_{2,i_\ell}&\neq 0.
\end{align*}
The coordinates $\vphi_{I,\ell}\colon U_{I,\ell}\to\mb{A}^{2(n-2)}\times\mb{A}^1$ are defined by
\[\vphi_{I,\ell}(H,\spn(v))=\left(\vphi_I(H),(-1)^{\ell-1}\cdot\frac{y_1\tilde{x}_{1,i_{\ell'}}+y_2\tilde{x}_{2,i_{\ell'}}}{y_1\tilde{x}_{1,i_\ell}+y_2\tilde{x}_{2,i_\ell}}\right),\]
where $\ell'$ is the remaining element of $\{1,2\}-\{\ell\}$.

Computing the transition functions of $\{(U_{I,\ell},\vphi_{I,\ell})\}_{I,\ell}$ is a standard computation, the details of which we will omit. For our purposes, it suffices to know that the determinant of the Jacobian matrix of $\vphi_{J,m}\circ\vphi_{I,\ell}^{-1}$ is given by
\begin{equation}\label{eq:jacobian of transition}
(-1)^{\ell+m}\left(\frac{\tilde{x}_{1,i_1}\tilde{x}_{2,i_2}-\tilde{x}_{1,i_2}\tilde{x}_{2,i_1}}{\tilde{x}_{1,j_1}\tilde{x}_{2,j_2}-\tilde{x}_{1,j_2}\tilde{x}_{2,j_1}}\right)^n\left(\frac{y_1\tilde{x}_{1,i_\ell}+y_2\tilde{x}_{2,i_\ell}}{y_1\tilde{x}_{1,j_m}+y_2\tilde{x}_{2,j_m}}\right)^2.
\end{equation}

\section{The bundle}\label{sec:bundle}
We will briefly revise the definition of 
the flag variety $\Phi_{r,n}$ parameterizing projective $r$-planes in $\mb{P}^n$ equipped with a point. We now need a vector bundle encoding the condition that such a pointed plane $(H,p)$ meets a hypersurface at the point $p$ to a prescribed order. Such a bundle is known as the \emph{bundle of principal parts} \cite[D\'efinition (16.3.1)]{MR238860}.
\begin{defn}\label{defn:pp_forPhi}
    Let $m$ be a non-negative integer, and $X\rightarrow Y$ a morphism of smooth schemes. For $i=1,2$, let $p_i\colon X\times_YX\to X$ denote projection onto the $i^\text{th}$ factor. Let $\mc{I}_\Delta$ denote the ideal sheaf of the diagonal embedding $\Delta\colon X\hookrightarrow X\times_YX$. The \emph{$m^\text{th}$ bundle of relative principal parts} $\mc{P}^m(\mathcal{L})\to X$ with respect to a line bundle $\mathcal{L}\rightarrow X$   is defined to be
    \[\mc{P}_{X/Y}^m(\mathcal{L}):=p_{2*}(p_1^*\mathcal{L}\otimes\mc{O}_{X\times_YX}/\mc{I}^{m+1}_\Delta).\]
\end{defn}
\begin{notn}
    Throughout this section, we fix $\Phi:=\Phi_{1,n}$ and $G:=\PGr(1,n)$. To simplify our indexing, we will use the notation $\mc{E}_{m+1}:=\mc{P}_{\Phi/G}^m(\mc{O}_\Phi(d))$.
\end{notn}

Recall that a vector bundle $V\to X$ is called \emph{relatively orientable} if there exists a line bundle $\mc{L}\to X$ such that $\det{V}\otimes\omega_X\cong\mc{L}\otimes\mc{L}$, where $\omega_X$ is the canonical bundle of $X$. Relative orientability is necessary for $V\to X$ to admit a well-defined Euler number valued in $\GW(k)$. It will turn out that $\mc{E}_m\to\Phi$ is not relatively orientable in our case of interest, which we now explain.

To check whether or not $\mc{E}_m\to\Phi$ is relatively orientable, we need to compute $\det\mc{E}_m$. Note that $\mc{E}_1=\mc{O}_\Phi(d)$. For any positive integer $m$, there is an exact sequence
\begin{equation}\label{eq:ses for bundle}
0\longrightarrow\mc{O}_\Phi(d)\otimes\Sym^{m-1}(\Omega_{\Phi/G})\longrightarrow\mc{E}_m\longrightarrow\mc{E}_{m-1}\longrightarrow 0.
\end{equation}
It follows (by induction) that $\rank\mc{E}_m=\binom{r+m-1}{r}$ (see e.g.~\cite[(3.5) Proposition]{MR0626480}). We can now use Equation~\ref{eq:ses for bundle} to compute $\det\mc{E}_m$.

\begin{lem}\label{lem:determinant of E_m}
    Let $m$ be a positive integer. Then
    \[\det\mc{E}_m\cong\mc{O}_\Phi\left(d\binom{r+m-1}{m-1}-(r+1)\binom{r+m-1}{m-2}\right)\otimes\pi^*\mc{O}_G\left(\binom{r+m-1}{m-2}\right),\]
    where we set $\binom{a}{-1}=0$ for $a>0$.
\end{lem}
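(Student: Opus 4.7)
The plan is to induct on $m$, using the short exact sequence~(\ref{eq:ses for bundle}) as the recursion. The base case $m=1$ reads $\mc{E}_1 = \mc{O}_\Phi(d)$, and the claimed formula indeed gives $\mc{O}_\Phi(d\cdot 1 - (r+1)\cdot 0)\otimes\pi^*\mc{O}_G(0)$ by the convention $\binom{r}{-1}=0$.

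For the inductive step, I would take determinants in~(\ref{eq:ses for bundle}) to obtain
\[\det\mc{E}_m \cong \det\bigl(\mc{O}_\Phi(d)\otimes\Sym^{m-1}\Omega_{\Phi/G}\bigr)\otimes\det\mc{E}_{m-1},\]
and then reduce the first factor to $\pi$-pullbacks and powers of $\mc{O}_\Phi(1)$ using three ingredients. First, $\det(L\otimes V)\cong L^{\otimes\rank V}\otimes \det V$ for a line bundle $L$ and a vector bundle $V$. Second, $\rank\Sym^{m-1}\Omega_{\Phi/G}=\binom{m+r-2}{r-1}$ since $\rank\Omega_{\Phi/G}=r$ by Lemma~\ref{lem:rel cotangent bundle Phi}. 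Third, the standard identity $\det\Sym^k V\cong(\det V)^{\otimes\binom{k+r-1}{r}}$ for $V$ of rank $r$ gives
\[\det\Sym^{m-1}\Omega_{\Phi/G}\cong(\det\Omega_{\Phi/G})^{\otimes\binom{m+r-2}{r}}.\]
Substituting $\det\Omega_{\Phi/G}\cong\mc{O}_\Phi(-r-1)\otimes\pi^*\mc{O}_G(1)$ from Lemma~\ref{lem:det Omega} yields
\[\det\bigl(\mc{O}_\Phi(d)\otimes\Sym^{m-1}\Omega_{\Phi/G}\bigr)\cong\mc{O}_\Phi\!\left(d\tbinom{m+r-2}{r-1}-(r+1)\tbinom{m+r-2}{r}\right)\otimes\pi^*\mc{O}_G\!\left(\tbinom{m+r-2}{r}\right).\]

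Combining this with the inductive hypothesis for $\det\mc{E}_{m-1}$ reduces the claim to a binomial bookkeeping step: one must check that the contributions to the exponent of $\mc{O}_\Phi(1)$ and the exponent of $\pi^*\mc{O}_G(1)$ telescope to the stated formulas. I would verify this with two applications of Pascal's rule, namely
\[\tbinom{r+m-1}{m-1}-\tbinom{r+m-2}{m-2}=\tbinom{r+m-2}{m-1}, \qquad \tbinom{r+m-1}{m-2}-\tbinom{r+m-2}{m-3}=\tbinom{r+m-2}{m-2},\]
together with $\binom{r+m-2}{m-1}=\binom{m+r-2}{r-1}$ and $\binom{r+m-2}{m-2}=\binom{m+r-2}{r}$ from the symmetry $\binom{a}{b}=\binom{a}{a-b}$. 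This matches the increments coming from the computation above and closes the induction.

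The work here is almost entirely formal, so there is no real obstacle; the only place to be careful is the $\det\Sym^k$ identity (which I would justify either by citing a standard reference or, if needed, by splitting $V$ to a sum of line bundles via the splitting principle), and in keeping the negative indices consistent with the convention $\binom{a}{-1}=0$ used at $m=1,2$.
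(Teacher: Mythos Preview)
Your proposal is correct and follows essentially the same route as the paper's proof: induction on $m$ via the short exact sequence~(\ref{eq:ses for bundle}), combined with the rank of $\Sym^{m-1}\Omega_{\Phi/G}$, the identity $\det\Sym^k V\cong(\det V)^{\otimes\binom{k+r-1}{r}}$, Lemma~\ref{lem:det Omega}, and Pascal's rule. The only differences are cosmetic---the paper indexes the inductive step as $m\to m+1$ rather than $m-1\to m$ and invokes references for the $\det\Sym^m$ computation rather than stating the general formula---but the content is identical.
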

\begin{proof}
    We argue by induction on $m$. Note that $\det\mc{E}_1=\mc{E}_1=\mc{O}_\Phi(d)$, as desired. Now suppose the result holds for some $m$. By Equation~\ref{eq:ses for bundle}, we have
    \[\det\mc{E}_{m+1}\cong\det\mc{E}_m\otimes\det(\mc{O}_\Phi(d)\otimes\Sym^m(\Omega_{\Phi/G})).\]
    Since $\Omega_{\Phi/G}$ has rank $r$, the vector bundle $\Sym^m(\Omega_{\Phi/G})$ has rank $\binom{r+m-1}{m}$. Thus
    \[\det(\mc{O}_\Phi(d)\otimes\Sym^m(\Omega_{\Phi/G}))\cong\mc{O}_\Phi\left(d\binom{r+m-1}{m}\right)\otimes\det\Sym^m(\Omega_{\Phi/G}).\]
    Finally, we have
    \[\det\Sym^m(\Omega_{\Phi/G})\cong\mc{O}_\Phi\left((-r-1)\binom{r+m-1}{m-1}\right)\otimes\pi^*\mc{O}_G\left(\binom{r+m-1}{m-1}\right)\]
    using standard computations (see e.g.~\cite[Exercise~II.5.16(c)]{MR0463157} and \cite[B.5.8]{MR1644323}). The desired result now follows from the addition rules of binomial coefficients.
\end{proof}

We can now give necessary and sufficient criteria for $\mc{E}_m\to\Phi$ to be relatively orientable.

\begin{prop}\label{prop:relatively orientable}
    The vector bundle $\mc{E}_m\to\Phi$ satisfies $\rank\mc{E}_m=\dim\Phi$ and is relatively orientable if and only if the following conditions hold:
    \begin{align*}
    r(n-r)+n &= \binom{r+m-1}{m-1}, \tag{rank condition} \\
    d\binom{r+m-1}{m-1} &\equiv (r+1)\left[\binom{r+m-1}{m-2}-1\right]\mod 2, \tag{parity of $\mc{O}_\Phi$}  \\
    \binom{r+m-1}{m-2} &\equiv n\mod 2.\tag{parity of $\pi^*\mc{O}_G$} 
    \end{align*}
\end{prop}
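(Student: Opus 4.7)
The plan is to reduce relative orientability to a computation in $\op{Pic}(\Phi)$ and then read off the two parity conditions from Lemmas \ref{lem:canonical bundle Phi} and \ref{lem:determinant of E_m}. The rank condition is immediate: the computation recorded just before Lemma \ref{lem:determinant of E_m} gives $\rank \mc{E}_m = \binom{r+m-1}{m-1}$, while Lemma \ref{lem:dim Phi} gives $\dim\Phi = r(n-r)+n$, so these are equal exactly when the first displayed equation holds.

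For relative orientability, I would first identify the Picard group of $\Phi$. Since $\Phi \cong \mb{P}\mc{S}$ is a projective bundle over $G = \mb{G}(r,n)$, and since $\op{Pic}(G) \cong \mb{Z}$ is generated by $\mc{O}_G(1)$, the projective bundle formula gives
\[\op{Pic}(\Phi) \cong \mb{Z}\langle \mc{O}_\Phi(1)\rangle \oplus \pi^*\op{Pic}(G).\]
Thus every line bundle on $\Phi$ admits a unique expression as $\mc{O}_\Phi(a)\otimes \pi^*\mc{O}_G(b)$, and such a bundle is a tensor square if and only if both $a$ and $b$ are even. Since relative orientability of $\mc{E}_m$ asks exactly that $\det \mc{E}_m \otimes \omega_\Phi$ be a square, the problem reduces to checking the parities of two integers.

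To produce those integers, I would tensor the formulas from Lemmas \ref{lem:canonical bundle Phi} and \ref{lem:determinant of E_m} to obtain
\[\det \mc{E}_m \otimes \omega_\Phi \cong \mc{O}_\Phi\!\left(d\binom{r+m-1}{m-1} - (r+1)\binom{r+m-1}{m-2} - (r+1)\right) \otimes \pi^*\mc{O}_G\!\left(\binom{r+m-1}{m-2} - n\right).\]
Demanding that the exponent of $\pi^*\mc{O}_G$ be even yields precisely the third stated condition, and demanding that the exponent of $\mc{O}_\Phi$ be even yields
\[d\binom{r+m-1}{m-1} \equiv (r+1)\!\left[\binom{r+m-1}{m-2} + 1\right] \pmod 2,\]
which, via $+1 \equiv -1 \pmod 2$, is the second stated condition.

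There is no real obstacle here: the only step that requires any thought is the identification of $\op{Pic}(\Phi)$ through the projective bundle formula, and the rest is bookkeeping with the line bundle isomorphisms already proved. The proposition is essentially a consolidation of Lemmas \ref{lem:canonical bundle Phi} and \ref{lem:determinant of E_m} into numerical criteria.
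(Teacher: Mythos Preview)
Your proposal is correct and follows essentially the same route as the paper: both identify $\op{Pic}(\Phi)\cong\mb{Z}^2$ via the projective bundle formula, tensor the formulas from Lemmas~\ref{lem:canonical bundle Phi} and~\ref{lem:determinant of E_m} to write $\det\mc{E}_m\otimes\omega_\Phi$ as $\mc{O}_\Phi(A)\otimes\pi^*\mc{O}_G(B)$, and then read off the parity conditions on $A$ and $B$. Your write-up is slightly more explicit in displaying the resulting exponents, but there is no substantive difference in strategy.
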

\begin{proof}
    Since $\dim\Phi=\dim{G}+r=r(n-r)+n$, the first condition is equivalent to $\rank\mc{E}_m=\dim\Phi$.
    
    The Picard group of $\Phi$ is generated by $\pi^*\mc{O}_G(1)$ and $\mc{O}_\Phi(1)$ (see e.g.~\cite[Exercise~II.7.9(a)]{MR0463157}), so $\mc{E}_m$ is relatively orientable if and only if
    \[\det\mc{E}_m\otimes\omega_\Phi=\mc{O}_\Phi(A)\otimes\pi^*\mc{O}_G(B)\]
    for some even numbers $A$ and $B$. Since 
    \[\omega_\Phi=\mc{O}_\Phi(-r-1)\otimes\pi^*\mc{O}_G(-n)\]
    and $\det\mc{E}_m$ is given in Lemma~\ref{lem:determinant of E_m}, we have the result.
\end{proof}

\begin{rem}
One can check that $\mc{E}_m$ is not orientable if $r=1$, as pointed out in \cite[Remark~6.4]{osculating}. Using the code of \cite{GabRep}, we found that for $r, m, n\le 5000$ and $r<n$, the conditions of Proposition~\ref{prop:relatively orientable} are only satisfied by tuples $(d, r, m, n)$ with $d$ even and $(r, m, n)$ one of the following:
\[(3, 5, 11),\qquad(3, 21, 445),\qquad(3, 37, 2287).\]
\end{rem}

While we are only interested in the $r=1$ case for this article, we can say something about the Euler number when $\mc{E}_m\to\Phi$ is relatively orientable.

\begin{prop}
    If $\mc{E}_m\to\Phi$ is relatively orientable with $\rank\mc{E}_m=\dim\Phi$, then its Euler number is given by $\frac{c_\mr{top}(\mc{E}_m)}{2}\mb{H}$, where $c_\mr{top}$ denotes the top Chern number over $\mb{C}$.
\end{prop}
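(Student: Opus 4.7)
The plan is to leverage the cellular structure of $\Phi$. As a projective bundle $\mb{P}\mc{S}$ over the Grassmannian $\mb{G}(r,n)$, the variety $\Phi$ inherits a cellular decomposition: combining the Schubert/Bruhat decomposition of $\mb{G}(r,n)$ by affine spaces with the standard cell decomposition of the projective fibers $\mb{P}^r$, one obtains a stratification of $\Phi$ by affine cells $\mb{A}^k$, each of even real dimension $2k$.

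The key input is then a general principle from enriched enumerative geometry: for a smooth projective variety $X$ admitting a cellular decomposition of this form, any relatively orientable vector bundle $V\to X$ whose rank equals $\dim X$ has $\GW(k)$-valued Euler number lying in the image of the hyperbolic map $\mb{Z}\to\GW(k)$, $n\mapsto n\mb{H}$. This is a consequence of the motive of $X$ being a sum of Tate motives, which forces the relevant Euler class to be detected by the rank homomorphism up to hyperbolic summands; see the treatments in work of Bachmann--Wickelgren, Levine, and Pauli. Applying this principle to $\mc{E}_m\to\Phi$ under the hypotheses in the statement, we obtain $e(\mc{E}_m)=n\mb{H}$ for some $n\in\mb{Z}$.

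It remains to identify $n$. The rank homomorphism $\GW(k)\to\mb{Z}$ sends $e(\mc{E}_m)$ to the complex top Chern number $c_\mr{top}(\mc{E}_m)$ (computed after base change to $\mb{C}$), while it sends $\mb{H}$ to $2$. Comparing ranks in $n\mb{H}=e(\mc{E}_m)$ yields $2n=c_\mr{top}(\mc{E}_m)$, and hence $e(\mc{E}_m)=\tfrac{c_\mr{top}(\mc{E}_m)}{2}\mb{H}$ as claimed. A byproduct is that $c_\mr{top}(\mc{E}_m)$ is necessarily even in this setting.

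The main obstacle is arranging the citation to match the formulation above, since various sources phrase the cellular/hyperbolic principle differently (some require a $\mb{G}_m$-action with isolated fixed points, others use motivic arguments directly). If a self-contained argument were preferred, one could instead choose a sufficiently generic section of $\mc{E}_m$ defined over $k$ and organize its zeros via a torus action on $\Phi$ whose fixed points are rational, so that the local indices are visibly $\langle 1\rangle$ and pair off into copies of $\mb{H}$ upon comparison with the $\mb{C}$-count; but the cellular route above is considerably cleaner.
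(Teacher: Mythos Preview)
Your proposal contains a genuine gap: the ``general principle'' you invoke---that cellularity of $X$ forces the $\GW(k)$-valued Euler number of any relatively orientable bundle $V\to X$ (with $\rank V=\dim X$) to be hyperbolic---is false. A clean counterexample is $T\mb{P}^2\to\mb{P}^2$. The base is cellular, and the bundle is relatively orientable since $\det T\mb{P}^2\otimes\omega_{\mb{P}^2}\cong\mc{O}_{\mb{P}^2}$. Yet its Euler number is the motivic Euler characteristic $\chi(\mb{P}^2)=\langle 1\rangle+\langle -1\rangle+\langle 1\rangle=2\langle 1\rangle+\langle -1\rangle$, which has rank $3$ and is not hyperbolic. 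More generally, for a cellular variety with cells of dimensions $d_i$, one has $\chi(X)=\sum_i\langle -1\rangle^{d_i}$, which is hyperbolic only when the even- and odd-dimensional cells are equinumerous. So the motive being a sum of Tate motives does not by itself force Euler classes to be detected up to hyperbolic summands by rank alone.

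The paper's argument is quite different and does not rely on cellularity at all. It uses the numerical constraints of Proposition~\ref{prop:relatively orientable} to show directly that, whenever $\mc{E}_m\to\Phi$ is relatively orientable with $\rank\mc{E}_m=\dim\Phi$, the dimension $\dim\Phi=r(n-r)+n$ must be \emph{odd}: if it were even, then $n$ and $r$ would both be even, forcing $(r+1)(B-1)$ even and hence $B=\binom{r+m-1}{m-2}$ odd, contradicting $B\equiv n\bmod 2$. One then invokes the general fact that Euler numbers of relatively orientable bundles over odd-dimensional smooth proper schemes are hyperbolic (Srinivasan--Wickelgren). Your fallback suggestion of a torus action could likely be made to work, but the odd-dimension route is the essential missing idea.
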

\begin{proof}
    Euler numbers of vector bundles over odd dimensional schemes are hyperbolic \cite[Proposition 19]{FourLines}, so $e(\mc{E}_m)$ is some multiple of $\langle 1\rangle+\langle -1\rangle$ whenever $\dim\Phi$ is odd. Since $\rank e(\mc{E}_m)$ is equal to the complex count (i.e.~the top Chern number of $\mc{E}_m$), the desired formula will follow if we can show that $\dim\Phi$ is odd in the relatively orientable case. We will show the contrapositive. 
    
    Let $B=\binom{r+m-1}{m-2}$. If $\dim\Phi=r(n-r)+n$ is even, then $n$ and $r$ must both be even. Thus $d(r(n-r)+n)$ must be even, so $(r+1)(B-1)$ must be even as well. It follows that $B$ must be odd. But this contradicts $B\equiv n\mod 2$, so $\mc{E}_m\to\Phi$ cannot be relatively orientable.
\end{proof}

\subsection{Orienting divisors}\label{sec:orienting divisors}
As we have seen, the bundle $\mc{E}_m\to\Phi$ is often not relatively orientable, notably in our cases of interest (i.e.~when $r=1$). In order to resolve this issue, we will follow the ideas of \cite{LV21} and work relative to a divisor.

\begin{defn}\label{defn:orientable_relative}
    Let $X\to\Spec{k}$ be a smooth, proper $k$-scheme. A vector bundle $V$ over $X$ is said to be \emph{relatively orientable relative to a divisor} $D\subset X$ if there exists a line bundle $\mc{L}$ and an isomorphism  $\rho\colon \det V\otimes\omega_{X/k}\otimes\mc{O}(D)\to\mc{L}^{\otimes 2}$. We will refer to such a divisor as an \emph{orienting divisor} for $V\to X$.
\end{defn}

To find a suitable orienting divisor for $\mc{E}_m\to\Phi$, we focus on our case of interest. That is, set $r=1$ and assume that $\rank\mc{E}_m=\dim\Phi$. This gives us an equality $2n-1=\binom{m}{m-1}$, so we find that $m=2n-1$. Under these assumptions, Lemma~\ref{lem:determinant of E_m} implies that
\[\det\mc{E}_{2n-1}\otimes\omega_\Phi\cong\mc{O}_\Phi(d(2n-1)-4n^2+6n-4)\otimes\pi^*\mc{O}_{G}(2n^2-4n+1).\]

For $D\subset\Phi$ to be an orienting divisor, it therefore suffices to require $\mc{O}_\Phi(D)\cong\pi^*\mc{O}_{G}(1)$ (when $d$ is even) or $\mc{O}_\Phi(D)\cong\mc{O}_\Phi(1)\otimes\pi^*\mc{O}_{G}(1)$ (when $d$ is odd). When $d$ is even, we may thus take $D\subset\Phi$ to be a divisor parameterizing pointed lines meeting a fixed codimension 2 linear subspace of $\mb{P}^n$. When $d$ is odd, we may take $D$ to be a divisor parameterizing pointed lines that either meet a fixed codimension 2 linear subspace or whose marked point lies on a fixed codimension 1 hyperplane. We will give explicit descriptions of such divisors in the next subsection.

\begin{rem}
We conclude this subsection by remarking that while $\mc{E}_m$ admits a well-defined Euler number (valued in $\GW(k)$) relative to an orienting divisor $D$, this Euler number will depend on $D$. By restricting our attention to $k=\mb{R}$, we could investigate how this Euler number behaves with respect to the configuration of $D(\mb{R})$ and the real points of our hypersurface in question, analogous to \cite{LV21}. However, we will not pursue this question in this article.
\end{rem}

\subsection{Trivializations}
As a final step before proving our main results, we need to choose local trivializations of $\mc{E}_m$ and $\mc{O}_\Phi(D)$ for an orienting divisor $D$. We also need to verify that our chosen trivializations are compatible with the coordinates we gave in Section~\ref{sec:coordinates}. Throughout this subsection, we fix an open affine $U_{I,\ell}\subset\Phi$ over which to work.

Our preferred coordinates on $\Phi$ give us a natural choice of trivialization for $\mc{O}_\Phi(D)$. The proof of the following lemma is standard (c.f.~\cite[Lemma~3.8 and Proposition~5.1]{osculating}), so we opt to omit it.

\begin{lem}\label{lem:local trivs for 3 bundles}
    The line bundle $\pi^*\mc{O}_G(1)$ is locally trivialized over $U_{I,\ell}$ by
    \[\frac{w_{i_1,i_2}}{w_{1,2}}:=
    \frac{{\tilde x_{1,i_1}\tilde x_{2,i_2}-\tilde x_{1,i_2}\tilde x_{2,i_1}}}{{\tilde x_{1,1}\tilde x_{2,2}-\tilde x_{1,2}\tilde x_{2,1}}}.\] 
    The line bundle $\mc{O}_\Phi(1)$ is locally trivialized over $U_{I,\ell}$ by
    \[\frac{z_{i_\ell}}{z_{1}}:=
    \frac{y_{1}\tilde x_{1,i_\ell}+y_{2}\tilde x_{2,i_\ell}}{y_{1}\tilde x_{1,1}+y_{2}\tilde x_{2,1}}.\]
    The line bundle $\det\mc{E}_m$ is locally trivialized on $U_{I,\ell}$ by
    \[s_{I,\ell}:=\left(\frac{z_{i_\ell}}{z_{1}}\right)^N
    \left(\frac{w_{i_1,i_2}}{w_{1,2}}\right)^M,\]
    where $N=m(d-m+1)$ and $M=\frac{m(m-1)}{2}$.
\end{lem}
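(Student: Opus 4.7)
My plan is to treat the three line bundles in sequence, in each case identifying a natural global (or rational global) section that restricts to a nonvanishing local frame on $U_{I,\ell}$.

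For $\pi^*\mc{O}_G(1)$, the strategy is to invoke the Plücker embedding $\mb{G}(1,n) \hookrightarrow \mb{P}(\wedge^2 k^{n+1})$, which identifies $\mc{O}_G(1)$ with the pullback of the hyperplane bundle. Its global sections are spanned by the Plücker coordinates $w_{j_1,j_2}$, and a short computation in the standard coordinates of Section~\ref{sec:coordinates} (using Equation~\ref{eq:column}) identifies $w_{j_1,j_2}$ with the $2\times 2$ minor $\tilde{x}_{1,j_1}\tilde{x}_{2,j_2} - \tilde{x}_{1,j_2}\tilde{x}_{2,j_1}$. On $U_I$, the coordinate $w_{i_1,i_2}$ is nonvanishing by definition of the chart, so it provides a local frame, and comparing against the fixed rational reference $w_{1,2}$ gives the ratio $w_{i_1,i_2}/w_{1,2}$.

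The argument for $\mc{O}_\Phi(1)$ is parallel. Using $\Phi \cong \mb{P}\mc{S}$ and the canonical surjection $\pi^*\mc{S}^\vee \twoheadrightarrow \mc{O}_{\mb{P}\mc{S}}(1)$, the dual basis to $\{e_1,\ldots,e_{n+1}\}$ gives sections $z_i$ of $\mc{O}_\Phi(1)$ with local expressions $z_i = y_1\tilde{x}_{1,i} + y_2\tilde{x}_{2,i}$, read off from Equation~\ref{eq:v}. The condition defining $U_{I,\ell}$ is precisely that $z_{i_\ell}$ is nonvanishing, so $z_{i_\ell}/z_1$ is the analogous trivialization relative to the reference $z_1$.

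For $\det\mc{E}_m$ with $r=1$, I would apply Lemma~\ref{lem:determinant of E_m} together with the binomial identities $\binom{m}{m-1}=m$ and $\binom{m}{m-2}=m(m-1)/2$ to conclude
\[\det\mc{E}_m \cong \mc{O}_\Phi(m(d-m+1)) \otimes \pi^*\mc{O}_G\!\left(\tfrac{m(m-1)}{2}\right).\]
A local frame for a tensor product of tensor powers of line bundles is the corresponding product of powers of the local frames just identified, yielding $s_{I,\ell}$ with exponents $N = m(d-m+1)$ and $M = m(m-1)/2$ as stated. The main (if minor) obstacle is bookkeeping: ensuring that the canonical identifications used for $\mc{O}_G(1)$ and $\mc{O}_\Phi(1)$ really do send the symbols $w_{j_1,j_2}$ and $z_i$ to the coordinate expressions above, so that the isomorphism of Lemma~\ref{lem:determinant of E_m} is compatible with tensoring the frames pointwise and produces no stray sign or scaling on $U_{I,\ell}$.
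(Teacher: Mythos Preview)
The paper does not actually prove this lemma: it declares the result standard, cites \cite[Lemma~3.8 and Proposition~5.1]{osculating}, and omits the argument. Your proposal is correct and is precisely the standard argument one would expect behind that citation---identifying $\mc{O}_G(1)$ via the Pl\"ucker embedding, $\mc{O}_\Phi(1)$ via the tautological surjection on $\mb{P}\mc{S}$, and then specializing Lemma~\ref{lem:determinant of E_m} to $r=1$ to read off the exponents $N=m(d-m+1)$ and $M=\tfrac{m(m-1)}{2}$.
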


For our orienting divisor, we may therefore take $D=\mb{V}(w_{1,2})$ (when $d$ is even) or $D=\mb{V}(w_{1,2}\cdot z_1)$ (when $d$ is odd). The former divisor parameterizes pointed lines meeting a particular codimension 2 linear subspace $H\subset\mb{P}^n$, while the latter divisor parameterizes pointed lines that meet $H$ or whose marked point lies on a particular codimension 1 hyperplane.

For our local trivialization of $\mc{E}_m$, we again work over an open patch $U_{I,\ell}\subset\Phi$. Assume $r=1$. The short exact sequence in Equation~\ref{eq:ses for bundle} implies that $\mc{E}_m$ has a filtration by successive quotients
\begin{equation}\label{eq:filtration}
\mc{O}_\Phi(d),\mc{O}_\Phi(d)\otimes\Omega_{\Phi/G},\ldots,\mc{O}_\Phi(d)\otimes\Sym^{m-1}(\Omega_{\Phi/G}).
\end{equation}
Since $r=1$, we have that $\mc{O}_\Phi(d)\otimes\Sym^a(\Omega_{\Phi/G})$ is a line bundle for $0\leq a\leq m-1$. We can obtain a local trivialization
\[\tau_{I,\ell}\colon \mc{E}_m|_{U_{I,\ell}}\longrightarrow\mb{A}^m\]
by locally trivializing the line bundles $\mc{O}_\Phi(d)\otimes\Sym^a(\Omega_{\Phi/G})$ for $0\leq a\leq m-1$. Recall that for any finite rank vector bundle $V$, we have a canonical isomorphism
\begin{equation}\label{eq:sym=gamma}
\Sym^a(V)\cong\Gamma^a(V^\vee)^\vee,
\end{equation}
where $\Gamma^a$ denotes $a\textsuperscript{th}$ divided powers and $(-)^\vee$ denotes the dual. In particular, we have $\Sym^a(\Omega_{\Phi/G})\cong\Gamma^a(\T_{\Phi/G})^\vee$, where $\T_{\Phi/G}$ is the relative tangent bundle. As 
\[t_{I,\ell}:=y_1\tilde{x}_{1,i_{\ell'}}+y_2\tilde{x}_{2,i_{\ell'}}\]
is our local coordinate of the fiber $\mb{P}^1\to\Phi\to G$, the bundle $\T_{\Phi/G}$ is locally trivialized by
\[\partial_{I,\ell}:=\frac{\partial}{\partial t_{I,\ell}}.\]
Moreover, $t_{I,\ell}(L,p)$ vanishes at $p$ for each $(L,p)$, so $t_{I,\ell}$ is a local parameter of $\mc{O}_{L,p}$. By Equation~\ref{eq:sym=gamma}, $\Sym^a(\Omega_{\Phi/G})$ is locally trivialized by $\Gamma^a(\partial_{I,\ell})^\vee$. The divided power differential $\Gamma^a(\partial_{I,\ell})$ is known as the \emph{Hasse derivative}.

\begin{defn}
    The \emph{$a\textsuperscript{th}$ Hasse derivative} is the generalized derivation
    \[D_t^{(a)}\colon k[t]\to k[t]\]
    that is determined by the rules
    \[D_t^{(a)}t^n=\begin{cases} \binom{n}{a}t^{n-a} & n\geq a,\\
    0 & \text{otherwise}.
    \end{cases}\]
\end{defn}

A key feature of Hasse derivatives is that they satisfy a form of Taylor's theorem\footnote{The Hasse derivative can also be defined for partial differentiation with several variables by multi-indexing. Taylor's theorem holds in this greater generality, but we will not need this for our purposes.}:

\begin{prop}\label{prop:hasse-taylor}
    Let $X$ be a $k$-scheme of dimension 1 with regular closed point $p$ (i.e. $\mc{O}_{X,p}$ is a regular local ring). Let $t$ be a local parameter of $\mc{O}_{X,p}$. If $f\in\mc{O}_{X,p}$, then
    \[f=\sum_{a=0}^\infty D_t^{(a)}f(p)\cdot t^a.\]
\end{prop}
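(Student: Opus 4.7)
The right-hand side is an infinite sum, so the identity must be interpreted in the $\mf{m}$-adic completion $\hat{\mc{O}}_{X,p}$; since completion of a Noetherian local ring is faithfully flat, the equality of the two sides in $\hat{\mc{O}}_{X,p}$ is equivalent to their equality in $\mc{O}_{X,p}$ (when the right side actually converges to an element of $\mc{O}_{X,p}$, as it does when $f\in\mc{O}_{X,p}$). My plan is to reduce the statement to a tautological identity in a power series ring and then read off coefficients.

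First I would invoke the Cohen structure theorem: because $X$ is a $k$-scheme the local ring $\mc{O}_{X,p}$ is equicharacteristic and contains a coefficient field, so the hypothesis that $\mc{O}_{X,p}$ is a regular one-dimensional local ring gives an isomorphism $\hat{\mc{O}}_{X,p}\cong\kappa(p)[[t]]$, where $\kappa(p)$ is the residue field and $t$ is the chosen local parameter. Under this isomorphism $f$ corresponds to a unique power series $\sum_{n\geq 0} c_n t^n$ with $c_n\in\kappa(p)$.

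Next I would extend the Hasse derivative $D_t^{(a)}$ from $k[t]$ to $\kappa(p)[[t]]$ by $\kappa(p)$-linearity and $\mf{m}$-adic continuity; this extension is unambiguous because, after truncating modulo $t^{N+1}$, only finitely many monomials $t^n$ contribute. The monomial rule $D_t^{(a)} t^n=\binom{n}{a}t^{n-a}$ then yields
\[D_t^{(a)} f \;=\; \sum_{n\geq a} c_n\binom{n}{a} t^{n-a},\]
and evaluating at $p$ (i.e.\ reducing modulo $(t)$) extracts the constant term, so $D_t^{(a)} f(p)=c_a$. Reassembling,
\[\sum_{a=0}^\infty D_t^{(a)} f(p)\cdot t^a \;=\; \sum_{a=0}^\infty c_a t^a \;=\; f.\]

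The only real obstacle is bookkeeping rather than mathematical: one has to be careful that $D_t^{(a)}$, as defined in the paper on $k[t]$, is being legitimately applied to $f\in\mc{O}_{X,p}$, and this is handled by passing to the completion and extending the derivative by continuity. Once that setup is in place, the proof is a one-line computation on monomials together with the fact that evaluation at $p$ is reduction modulo $(t)$.
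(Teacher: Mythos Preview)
Your argument is correct. The paper does not actually supply a proof of this proposition; it simply cites \cite[Corollary~2.5.14]{Gol03}. Your route via the Cohen structure theorem --- identifying $\hat{\mc{O}}_{X,p}$ with $\kappa(p)[[t]]$, extending the Hasse derivatives by $\kappa(p)$-linearity and continuity, and reading off coefficients --- is the standard way to establish the result, and is essentially what one finds in Goldschmidt's reference.

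One small point worth tightening: the paper only defines $D_t^{(a)}$ on $k[t]$, so applying it to $f\in\mc{O}_{X,p}$ already presupposes an extension. You handle this by passing to the completion and extending $\kappa(p)$-linearly, which is fine, but you might note that this agrees with the intrinsic definition of Hasse derivatives on $\hat{\mc{O}}_{X,p}$ (via the $k$-algebra map $t\mapsto t+T$ into $\hat{\mc{O}}_{X,p}[[T]]$), so that the choice of coefficient field in the Cohen isomorphism does not affect the operators $D_t^{(a)}$ themselves. Also, your parenthetical about faithful flatness and convergence in $\mc{O}_{X,p}$ is slightly beside the point: the identity is genuinely an equality in $\hat{\mc{O}}_{X,p}$, with $f$ understood via its image there, and that is how the paper uses it downstream.
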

\begin{proof}
    A proof can be found in \cite[Corollary~2.5.14]{Gol03}.
\end{proof}

We can now define our local trivializations of $\mc{E}_m$, which will be given by taking the first $m$ jets of a section with respect to the Hasse derivative.

\begin{lem}\label{lem:local trivs}
    Let $\tau_{I,\ell}\colon \mc{E}_m|_{U_{I,\ell}}\to\mb{A}^m$ be defined by 
    \[\tau_{I,\ell}(F,(L,p))=(F(p),D^{(1)}_{t_{I,\ell}}F(p),\ldots,D^{(m-1)}_{t_{I,\ell}}F(p))\]
    for each $F\in\mc{O}_{\mb{P}^n}(d)$. Then $\tau_{I,\ell}$ is a local trivialization of $\mc{E}_m$.
\end{lem}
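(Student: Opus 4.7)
The plan is to identify $\tau_{I,\ell}$ with the natural ``Taylor expansion'' map for the bundle of principal parts along the fibers of $\pi$. First, I would unwind Definition~\ref{defn:pp_forPhi} in our setting: because the fiber of $\pi\colon\Phi\to G$ through $(L,p)$ is canonically identified with the line $L$ itself (as a scheme), the fiber of $\mc{E}_m=\mc{P}^{m-1}_{\Phi/G}(\mc{O}_\Phi(d))$ at $(L,p)$ is $\mc{O}_\Phi(d)|_p\otimes_{k(p)}\mc{O}_{L,p}/\mf{m}_p^{m}$, i.e.\ the space of $(m-1)$-jets along $L$ at $p$ of sections of $\mc{O}_\Phi(d)|_L$. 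This identification is functorial in $(L,p)\in U_{I,\ell}$.

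Next, I would verify that $t_{I,\ell}$ restricts to a local parameter at the marked point along each fiber of $\pi$ contained in $U_{I,\ell}$. This is essentially asserted in the excerpt and follows directly from the construction of the twisted affine chart $\psi_\ell$ in Definition~\ref{def:coordinates on Phi}: once $(H,\mr{span}(v))$ is fixed with $y_1\tilde{x}_{1,i_\ell}+y_2\tilde{x}_{2,i_\ell}\neq 0$, the numerator $y_1\tilde{x}_{1,i_{\ell'}}+y_2\tilde{x}_{2,i_{\ell'}}$ is (up to a nonvanishing denominator) the coordinate of the point $\beta(H,\mr{span}(v))$ in the affine chart on $L$ complementary to $p$, and vanishes to order one at $p$.

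With these identifications in place, I would apply Proposition~\ref{prop:hasse-taylor} to $F\in\mc{O}_\Phi(d)(U_{I,\ell})$ restricted to $L$: after dividing by the chosen local trivialization of $\mc{O}_\Phi(d)$, we have
\[F|_L\equiv\sum_{a=0}^{m-1}D^{(a)}_{t_{I,\ell}}F(p)\cdot t_{I,\ell}^{\,a}\pmod{t_{I,\ell}^{\,m}}.\]
The right-hand side is precisely the expansion of the class of $F|_L$ in the basis $\{1,t_{I,\ell},\ldots,t_{I,\ell}^{m-1}\}$ of $\mc{O}_{L,p}/\mf{m}_p^{m}$, so $\tau_{I,\ell}(F,(L,p))$ records the full set of coordinates of the image of $F$ in the fiber of $\mc{E}_m$ over $(L,p)$. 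Letting $(L,p)$ vary over $U_{I,\ell}$, the same formula exhibits $\tau_{I,\ell}$ as a regular fiberwise isomorphism $\mc{E}_m|_{U_{I,\ell}}\to U_{I,\ell}\times\mb{A}^m$, which is exactly a local trivialization. Equivalently, one may recognize the $a$th component of $\tau_{I,\ell}$ as the pairing of a jet with the dual section $\Gamma^{a}(\partial_{I,\ell})^\vee$ of $\mc{O}_\Phi(d)\otimes\Sym^{a}(\Omega_{\Phi/G})$ from the filtration~\eqref{eq:filtration}, and then the triangularity of the filtration guarantees a trivialization.

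The main obstacle is bookkeeping: carefully matching the abstract principal-parts definition with the concrete Hasse-derivative formulas, identifying the fiber of $\pi$ through $(L,p)$ with $L$ so that $t_{I,\ell}$ becomes a uniformizer at $p$, and tracking the local trivialization of $\mc{O}_\Phi(d)$ consistently so that the scalars ``$F(p)$'' and ``$D^{(a)}_{t_{I,\ell}}F(p)$'' actually make sense. The mathematical content---that the Taylor coefficients trivialize the jet bundle---is entirely Proposition~\ref{prop:hasse-taylor}; the written proof should therefore focus on pinning down these identifications.
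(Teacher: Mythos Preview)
Your proposal is correct and follows essentially the same approach as the paper: both hinge on the observation that $t_{I,\ell}$ is a local parameter of $L$ at $p$, then invoke Taylor's theorem for Hasse derivatives (Proposition~\ref{prop:hasse-taylor}) together with the filtration~\eqref{eq:filtration} and the fact that $\Gamma^a(\partial_{I,\ell})^\vee$ trivializes $\Sym^a(\Omega_{\Phi/G})$. Your writeup is more explicit about the bookkeeping (identifying the fiber of $\mc{E}_m$ with jets along $L$ and spelling out why the Hasse coefficients give coordinates), whereas the paper compresses all of this into a one-sentence appeal to the filtration and Proposition~\ref{prop:hasse-taylor}; but the mathematical content is the same.
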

\begin{proof}
    First, note that if $(L,p)\in U_{I,\ell}$, then $t_{I,\ell}$ is a local parameter of $L$ at $p$. Moreover, $p$ is a regular closed point of $L$. Since $\mc{O}_\Phi(d)\cong\beta^*\mc{O}_{\mb{P}^n}(d)$, the desired result follows from the filtration given in Equation~\ref{eq:filtration}, Taylor's theorem for Hasse derivatives (Proposition~\ref{prop:hasse-taylor}), and our observation that $\Gamma^a(\partial_{I,\ell})^\vee$ locally trivializes $\Sym^a(\Omega_{\Phi/G})$.
\end{proof}

To conclude, let $m=2n-1$ (so that $\rank\mc{E}_m=\dim\Phi_{1,n}$). We need to show that our local coordinates and local trivializations are compatible with our relative orientation of $\mc{E}_{2n-1}$ relative to $D$. This will immediately follow from Lemma~\ref{lem:local trivs for 3 bundles} once we prove the following lemma.

\begin{lem}
    Under the isomorphism $\det\mc{E}_m\cong\mc{O}_\Phi(m(d-m+1))\otimes\pi^*\mc{O}_G(\frac{m(m-1)}{2})$, the trivialization $\det\tau_{I,\ell}$ is sent to $\left(\frac{z_{i_\ell}}{z_1}\right)^{m(d-m+1)}\left(\frac{w_{i_1,i_2}}{w_{1,2}}\right)^{m(m-1)/2}$.
\end{lem}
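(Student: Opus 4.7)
The strategy is to trace $\det\tau_{I,\ell}$ through the chain of isomorphisms that yield Lemma~\ref{lem:determinant of E_m}. Iterating the exact sequence of Equation~\ref{eq:ses for bundle} produces a filtration $0 = F^m \subset F^{m-1} \subset \cdots \subset F^0 = \mc{E}_m$ with line bundle subquotients $F^a/F^{a+1} \cong \mc{O}_\Phi(d) \otimes \Omega_{\Phi/G}^{\otimes a}$ (using $r=1$, so $\Sym^a$ coincides with $(-)^{\otimes a}$ on $\Omega_{\Phi/G}$). Taking determinants gives a canonical isomorphism
\[\det \mc{E}_m \cong \bigotimes_{a=0}^{m-1} \bigl(\mc{O}_\Phi(d) \otimes \Omega_{\Phi/G}^{\otimes a}\bigr) \cong \mc{O}_\Phi(md) \otimes \Omega_{\Phi/G}^{\otimes m(m-1)/2},\]
and substituting Lemma~\ref{lem:det Omega} recovers the identification $\det\mc{E}_m \cong \mc{O}_\Phi(m(d-m+1)) \otimes \pi^*\mc{O}_G(m(m-1)/2)$.

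By Proposition~\ref{prop:hasse-taylor}, the map $\tau_{I,\ell}$ of Lemma~\ref{lem:local trivs} is the coordinate map for the local basis $\{\sigma, \sigma t_{I,\ell}, \ldots, \sigma t_{I,\ell}^{m-1}\}$ of $\mc{E}_m|_{U_{I,\ell}}$, where $\sigma = (z_{i_\ell}/z_1)^d$ is the local generator of $\mc{O}_\Phi(d)$ read off from Lemma~\ref{lem:local trivs for 3 bundles}. This basis is filtration-compatible: $\{\sigma t_{I,\ell}^a, \ldots, \sigma t_{I,\ell}^{m-1}\}$ spans $F^a$, and $\sigma t_{I,\ell}^a$ projects to the local generator $\sigma \otimes (dt_{I,\ell})^{\otimes a}$ of $F^a/F^{a+1}$. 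Consequently, $\det\tau_{I,\ell}$ corresponds to the wedge $\sigma \wedge \sigma t_{I,\ell} \wedge \cdots \wedge \sigma t_{I,\ell}^{m-1}$, which the filtration isomorphism sends to
\[\sigma^{\otimes m} \otimes (dt_{I,\ell})^{\otimes m(m-1)/2} \in \mc{O}_\Phi(md) \otimes \Omega_{\Phi/G}^{\otimes m(m-1)/2}.\]

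The final step is to identify this image with the claimed expression. Under Lemma~\ref{lem:det Omega}, the local generator $dt_{I,\ell}$ of $\Omega_{\Phi/G}$ is identified with $(z_1/z_{i_\ell})^2 \cdot (w_{i_1,i_2}/w_{1,2})$; granting this, the exponent of $z_{i_\ell}/z_1$ in the preceding display becomes $md - m(m-1) = m(d-m+1)$, while the exponent of $w_{i_1,i_2}/w_{1,2}$ is $m(m-1)/2$, giving exactly the claimed trivializer. This last chart-compatibility check is the main obstacle and is best settled via a transition function computation: Equation~\ref{eq:jacobian of transition} gives the transition of $\omega_\Phi$ between $U_{I,\ell}$ and $U_{J,m}$, and dividing out the transition of $\pi^*\omega_G \cong \pi^*\mc{O}_G(-n-1)$ (read off from standard Grassmannian coordinates) extracts the transition of $\Omega_{\Phi/G}$; a direct check shows this matches the transition of $\mc{O}_\Phi(-2) \otimes \pi^*\mc{O}_G(1)$ in the trivializers above, the sign $(-1)^{\ell-1}$ being exactly what is absorbed by the twist convention of Definition~\ref{def:coordinates on Phi}.
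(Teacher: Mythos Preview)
Your argument is correct and follows essentially the same route as the paper: both proofs unwind the determinant via the filtration of Equation~\ref{eq:ses for bundle}, identify the local trivializer of each graded piece $\mc{O}_\Phi(d)\otimes\Sym^a(\Omega_{\Phi/G})$ as $(z_{i_\ell}/z_1)^{d-2a}(w_{i_1,i_2}/w_{1,2})^a$, and take the tensor product over $a=0,\ldots,m-1$. The only substantive difference is that you supply an explicit justification, via the transition Jacobian of Equation~\ref{eq:jacobian of transition}, for the identification of $dt_{I,\ell}$ with $(z_{i_\ell}/z_1)^{-2}(w_{i_1,i_2}/w_{1,2})$ under Lemma~\ref{lem:det Omega}, a step the paper simply asserts.
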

\begin{proof}
    The isomorphism $\det\mc{E}_m\cong\mc{O}_\Phi(m(d-m+1))\otimes\pi^*\mc{O}_G(\frac{m(m-1)}{2})$ was constructed inductively by the isomorphisms
    \[\det\mc{E}_{a+1}\cong\det\mc{E}_a\otimes\det(\mc{O}_\Phi(d)\otimes\Sym^a(\Omega_{\Phi/G})),\]
    and our trivializations $\tau_{I,\ell}$ were defined by trivializing each $\mc{O}_\Phi(d)\otimes\Sym^a(\Omega_{\Phi/G})$. For each $a$, the section
    \[s_a:=\left(\frac{z_{i_\ell}}{z_1}\right)^d\otimes\left(\frac{w_{i_1,i_2}}{w_{1,2}}\right)^a\]
    locally trivializes $\mc{O}_\Phi(d)\otimes\Sym^a(\Omega_{\Phi/G})$. The section $(\frac{z_{i_\ell}}{z_1})^{-2a}(\frac{w_{i_1,i_2}}{w_{1,2}})^a$ is the image of the trivialization $\Gamma^a(\partial_{I,\ell})^\vee$ of $\Sym^a(\Omega_{\Phi/G})$, and thus $s_a$ is the image of the section $\sigma_a(F,(L,p))=D^{(a)}_{t_{I,\ell}}F(p)$. It follows that
    \[\bigotimes_{a=0}^{m-1}\sigma_a=\left(\frac{z_{i_\ell}}{z_1}\right)^{m(d-m+1)}\left(\frac{w_{i_1,i_2}}{w_{1,2}}\right)^{m(m-1)/2},\]
    as desired.
\end{proof}

\begin{rem}
    Over the field of complex numbers, the Euler number of $\mc{E}_m\to\Phi$ is well-known. See for example \cite{bertone}, and \cite[Equation~(3.4)]{mu} for an approach using Gromov--Witten invariants.
\end{rem}

\section{Wronskian interpretation of the local index}\label{sec:wronskian}
In this section, we give a more precise statement of Theorem~\ref{thm:wronskian}, which we then prove. Throughout, we assume $r=1$ and $m=2n-1$ (as in Section~\ref{sec:orienting divisors}). Let $F$ be a global section of $\mc{O}_{\mb{P}^n}(d)$. Note that $\beta^*\mc{O}_{\mb{P}^n}(d)\cong\mc{O}_\Phi(d)$, where $\beta\colon \Phi\to\mb{P}^n$ is the projection map arising from Definition~\ref{def:Phi}. The degree $d$ hypersurface $\mb{V}(F)\subset\mb{P}^n$ determines a section
\[\sigma_F\colon \Phi\longrightarrow\mc{E}_{2n-1},\]
which is given on a pointed line $(L,p)\in\Phi$ by taking the principal parts of the form $F$ along $L\subset\mb{P}^n$. In order to compute the local index of this section, we will write out a formula for the composite
\[g_{I,\ell}:=\tau_{I,\ell}|_{U_{I,\ell}}\circ\sigma_F\circ\vphi_{I,\ell}^{-1}|_{U_{I,\ell}}\colon \mb{A}^{2n-1}\longrightarrow\mb{A}^{2n-1}\]
around a zero $(L,p)\in U_{I,\ell}\subset\Phi$. Here, $(U_{I,\ell},\vphi_{I,\ell})$ are standard local coordinates around $(L,p)$ (Definition~\ref{def:coordinates on Phi}) and $\tau_{I,\ell}$ is our chosen local trivialization (Lemma~\ref{lem:local trivs}). After we have done this, we will compute the local degree $\deg_{(L,p)}(g_{I,\ell})$ and relate it to the geometry of $\mb{V}(F)$ and its highly tangent line $(L,p)$. If the highly tangent line is geometrically simple (i.e.~geometrically reduced) and $k(L,p)/k$ is separable, then 
\begin{equation}\label{eq:jacobian}
    \deg_{(L,p)}(g_{I,\ell})=\Tr_{k(L,p)/k}\langle\Jac(g_{I,\ell})|_{(L,p)}\rangle
\end{equation}
by \cite[Lemma 9]{KassWickelgrenEKLClass}.

\begin{thm}[Wronskian interpretation, precisely]
Let $F$ be a global section of $\mc{O}_{\mb{P}^n}(d)$ with $d\geq 2n-1$. Let $L$ be a line that is tangent to $\mb{V}(F)$ at $p\in L$ to order $2n-1$. Finally, assume that $(L,p)$ is geometrically reduced as a point in $\Phi$, and that $k(L,p)/k$ is separable. Then
\[\ind_{(L,p)}\sigma_F=\Tr_{k(L,p)/k}\langle\Wr_t(\nabla\beta^* F)(p)\rangle,\]
where $\Wr_t$ denotes the Hasse--Wronskian with respect to a local parameter $t$ of $L$ and $\nabla\beta^* F$ denotes the gradient of $\beta^* F$ with respect to the local coordinates representing $(L,p)$.
\end{thm}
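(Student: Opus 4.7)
The plan is to apply Equation~\ref{eq:jacobian} directly: since the theorem assumes $(L,p)$ is geometrically reduced and $k(L,p)/k$ is separable, the local index equals $\Tr_{k(L,p)/k}\langle\det\Jac(g_{I,\ell})|_{(L,p)}\rangle$. The task then reduces to showing that $\det\Jac(g_{I,\ell})|_{(L,p)}$ agrees with $\Wr_t(\nabla\beta^*F)(p)$.

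Unwinding the definitions using Lemma~\ref{lem:local trivs}, the $a$-th coordinate function of $g_{I,\ell}$ (for $0\le a\le 2n-2$) is $D^{(a)}_t(\beta^*F)$ expressed in the local coordinates $\vphi_{I,\ell}$, where $t:=t_{I,\ell}$. Labeling the Grassmannian coordinates on $U_{I,\ell}$ as $x_1,\ldots,x_{2n-2}$ and the fiber coordinate as $x_{2n-1}=t$, the $(a,j)$-entry of $\Jac(g_{I,\ell})$ is $\partial_{x_j}\bigl(D^{(a)}_t(\beta^*F)\bigr)$. I would then verify the identity
\[\partial_{x_j}\bigl(D^{(a)}_t(\beta^*F)\bigr)=D^{(a)}_t\bigl(\partial_{x_j}(\beta^*F)\bigr)\]
case by case. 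For $j\neq 2n-1$, the coordinate $x_j$ is independent of $t$, so $\partial_{x_j}$ and $D^{(a)}_t$ commute on the nose. For $j=2n-1$, using the identification $\partial_t=D^{(1)}_t$ together with the composition rule $D^{(a)}_tD^{(b)}_t=\binom{a+b}{a}D^{(a+b)}_t$ for Hasse derivatives, both $\partial_t D^{(a)}_t$ and $D^{(a)}_t\partial_t$ evaluate to $(a+1)D^{(a+1)}_t$.

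By definition, $\Wr_t(\nabla\beta^*F)$ is the determinant of the $(2n-1)\times(2n-1)$ matrix with $(i,j)$-entry $D^{(i)}_t\partial_{x_j}(\beta^*F)$, which is exactly the matrix $\Jac(g_{I,\ell})$ just computed. Substituting this identification into Equation~\ref{eq:jacobian} yields the theorem. The only subtlety in the argument is the Hasse derivative bookkeeping above, together with the prerequisite that the Jacobian formula actually computes the relative Euler index in the first place---that is, that the trivializations $\tau_{I,\ell}$ are compatible with the relative orientation of $\mc{E}_{2n-1}$ relative to the orienting divisor $D$. That compatibility was established by the final lemma of Section~\ref{sec:orienting divisors}, so no new orientation argument is needed, and the Hasse derivative computation above completes the proof.
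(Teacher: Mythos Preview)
Your proposal is correct and follows essentially the same approach as the paper: apply Equation~\ref{eq:jacobian}, write $g_{I,\ell}$ via Lemma~\ref{lem:local trivs}, and identify $\Jac(g_{I,\ell})$ with the Hasse--Wronskian matrix by showing that $\partial_{x_j}$ commutes with $D^{(a)}_t$. The only cosmetic differences are that the paper works in the explicit chart $U_{(n,n+1),2}$ and checks $\partial_yD^{(a)}_y=D^{(a)}_y\partial_y$ by computing $\binom{m}{a}(m-a)=m\binom{m-1}{a}$ on monomials, whereas you invoke the general composition rule $D^{(a)}_tD^{(b)}_t=\binom{a+b}{a}D^{(a+b)}_t$; these are equivalent.
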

\begin{proof}
    By \cite{BBMMO21}, we can assume that $k(L,p)=k$ by base changing and tracing back down (if necessary). If $(L,p)\in U_{I,\ell}$, then we have
    \[\tau_{I,\ell}\circ\sigma_F\circ\vphi^{-1}_{I,\ell}(L,p)=(\beta^*F(p),D^{(1)}_{t_{I,\ell}}\beta^*F(p),\ldots,D^{(2n-2)}_{t_{I,\ell}}\beta^*F(p)).\]
    By Equation~\ref{eq:jacobian}, it suffices to compute the Jacobian of $\tau_{I,\ell}\circ\sigma_F\circ\vphi^{-1}_{I,\ell}$ in terms of the local coordinates on $U_{I,\ell}$. We will treat the case where $(L,p)\in U_{(n,n+1),2}$; the case of $(L,p)\in U_{I,\ell}$ for arbitrary $I,\ell$ is completely analogous. To simplify notation, denote $t:=t_{(n,n+1),2}$.

    On $U_{(n,n+1),2}$, we have $\tilde{x}_{1,n}\tilde{x}_{2,n+1}-\tilde{x}_{1,n+1}\tilde{x}_{2,n}\neq 0$ and $y_1\tilde{x}_{1,n+1}+y_2\tilde{x}_{2,n+1}\neq 0$. Our affine coordinates on $U_{(n,n+1),2}$ are given by
    \begin{align*}
        x_{1,i}={}&\frac{\tilde{x}_{1,i}\tilde{x}_{2,n+1}-\tilde{x}_{2,i}\tilde{x}_{1,n+1}}{\tilde{x}_{1,n}\tilde{x}_{2,n+1}-\tilde{x}_{2,n}\tilde{x}_{2,n+1}},\\
        x_{2,i}={}&\frac{\tilde{x}_{2,i}\tilde{x}_{1,n}-\tilde{x}_{1,i}\tilde{x}_{2,n}}{\tilde{x}_{1,n}\tilde{x}_{2,n+1}-\tilde{x}_{2,n}\tilde{x}_{2,n+1}},\\
        y={}&\frac{y_1\tilde{x}_{1,n}+y_2\tilde{x}_{2,n}}{y_1\tilde{x}_{1,n+1}+y_2\tilde{x}_{2,n+1}}
    \end{align*}
    for $1\leq i\leq n-1$. Note that $y$ is precisely the dehomogenization of our local parameter $t$, so that $D^{(a)}_t=D^{(a)}_y$ on $U_{(n,n+1),2}$.
    
    Let $[z_1:\ldots:z_{n+1}]$ denote coordinates on $\mb{P}^n$. In our coordinates on $U_{(n,n+1),2}$, the map $\beta\colon \Phi\to\mb{P}^n$ is given by
    \[\beta(x_{1,1},x_{2,1},\ldots,x_{1,n-1},x_{2,n-1},y)=(x_{1,1}y+x_{2,1},\ldots,x_{1,n-1}y+x_{2,n-1},y),\]
    where we have written this point using the standard coordinates for $\{z_{n+1}\neq 0\}\subset\mb{P}^n$ (since $y_1\tilde{x}_{1,n+1}+y_2\tilde{x}_{2,n+1}$ corresponds to $z_{n+1}$ under $\beta$). Thus
    \begin{align*}
    \beta^*F&=F(x_{1,1}y+x_{2,1},\ldots,x_{1,n-1}y+x_{2,n-1},y,1).
    \end{align*}
    Here, the argument 1 in $F$ corresponds to the fact that we have dehomogenized by scaling all projective coordinates by $\frac{1}{z_{n+1}}$.     We denote the gradient with respect to the coordinates $(x_{1,1},x_{2,1},\ldots,x_{1,n-1},x_{2,n-1},y)$ by $\nabla$. Recall that $D^{(a)}_t=D^{(a)}_y$ on $U_{(n,n+1),2}$. One can now check that $\nabla$ and $D^{(a)}_y$ commute. For example, we have
    \begin{align*}
        \frac{\partial}{\partial y}D^{(a)}_y y^m&=\frac{\partial}{\partial y}\binom{m}{a}y^{m-a}\\
        &=\binom{m}{a}(m-a)y^{m-a-1},\\
        D^{(a)}_y\frac{\partial}{\partial y}y^m&=D^{(a)}_ymy^{m-1}\\
        &=m\binom{m-1}{a}y^{m-a-1}.
    \end{align*}
    Since $\binom{m}{a}(m-a)=m\binom{m-1}{a}$, we conclude that $\frac{\partial}{\partial y}D^{(a)}_y=D^{(a)}_y\frac{\partial}{\partial y}$. Verifying that $D^{(a)}_y$ commutes with $\frac{\partial}{\partial x_{i,j}}$ is even simpler, as $y$ and $x_{i,j}$ do not depend on one another.
    
    Altogether, we have proved that the Jacobian determinant of $(\beta^*F(p),\ldots,D^{(2n-2)}_t\beta^*F(p))$ is given by the determinant of
    \[\begin{pmatrix}
        \nabla\beta^*F(p)\\
        \nabla D^{(1)}_t\beta^*F(p)\\
        \vdots\\
        \nabla D^{(2n-2)}_t\beta^*F(p)
    \end{pmatrix}=
    \begin{pmatrix}
        \nabla\beta^*F(p)\\
        D^{(1)}_t\nabla\beta^*F(p)\\
        \vdots\\
        D^{(2n-2)}_t\nabla\beta^*F(p)
    \end{pmatrix}.\]
    The determinant of the latter matrix is precisely the Wronskian of $\nabla\beta^*F$ (with respect to the Hasse derivative) evaluated at $p$, which we denote by $\Wr_t(\nabla\beta^*F)$.
\end{proof}

\begin{rem}
    Brazelton has some results on geometrically interpreting Wronskians in enriched enumerative geometry \cite{Bra25}. The Hasse--Wronskian also arises as the local index for the enriched count of inflection points to superelliptic curves \cite{CDGLHS21}, showing that our Theorem~\ref{thm:wronskian} is compatible with this result.
\end{rem}

\section{Highly tangent lines and the second fundamental form}\label{sec:fundamental form}

In this section, we will prove Theorem~\ref{thm:fundamental form}. First, we need to formulate the problem of counting highly tangent lines to a plane curve $X$ in terms of the second fundamental form of $X$. We will begin with a more general setup, then restrict our attention to curves when necessary.

Let us suppose $X=\Spec(A)\subset \mathbb{P}^n$ is smooth affine, and let $I$ be the kernel of the product map $A\otimes_k A \rightarrow A$. For each $m\ge 0$, there is a short exact sequence of left $A$-modules
\begin{equation}
    0\longrightarrow I^m/I^{m+1}
    \longrightarrow (A\otimes_k A)/I^{m+1}
    \overset{\pi_m}{\longrightarrow}
     (A\otimes_k A)/I^{m}
    \longrightarrow 0.
\end{equation}
We call the following map of left $A$-modules the \emph{Taylor series map} (see \cite[Proposition~6.4.3]{MR1308020})
\begin{equation}\label{eq:taylor}
    \delta_m\colon A^{\oplus n+1} \longrightarrow (A\otimes_k A)/I^{m+1},
\end{equation}
given by sending $e_1=(1, 0, 0, \ldots, 0)$ to (the class of) $1\otimes a_1$, $e_2=(0, 1, 0, \ldots, 0)$ to $1\otimes a_2$, and so on. Here, $a_1, a_2,\ldots, a_{n+1}$ are the generators of $\Gamma(\mathbb{P}^n, \mc{O}_{\mathbb{P}^n}(1))$ restricted to $X$. 

When $m=0$, we have that $(A\otimes_k A)/I^{m+1}=A$, and the map \eqref{eq:taylor} is the natural evaluation map $\Gamma(\mathbb{P}^n, \mc{O}_{\mathbb{P}^n}(1))\otimes \mc{O}_{\mathbb{P}^n}(-1)\rightarrow \mc{O}_{\mathbb{P}^n}$ restricted to $X$. By the properties of the kernel, and the fact that $\pi_m \circ \delta_{m} = \delta_{m-1}$, there exists a map
\begin{equation}
    \ker(\delta_{m-1}) \longrightarrow I^m/I^{m+1}\cong \Sym^{m}\Omega_X,
\end{equation}
called \emph{$m^\mathrm{th}$ fundamental form}. We have $\ker(\delta_0)\cong \Omega_{\mathbb{P}^n}|_X$ and $\ker(\delta_1)\cong N^\vee_{X/\mb{P}^n}$ (see \cite[Theorem II.8.13 and Proposition II.8.12]{MR0463157}).  If $X$ is a smooth projective variety, those maps naturally glue together on the affine patches of $X$. We are interested in one particular fundamental form.
\begin{defn}
    Let $X\subset\mathbb{P}^n$ be a smooth projective variety. The map 
    \[\II\colon N^\vee_{X/\mb{P}^n}\longrightarrow\Sym^2\Omega_X\]
    is called the \emph{second fundamental form} of $X\subset\mb{P}^n$. By hom-tensor adjunction, we can think of $\II$ as a global section of the bundle 
    \[\mc{F}:=\Hom(N^\vee_{X/\mb{P}^n},\Sym^2\Omega_X)\cong N_{X/\mb{P}^n}\otimes\Sym^2\Omega_X.\]
\end{defn}
\begin{rem}
    The second fundamental form can be constructed by applying \cite[Section I.3]{MR274461} to the exact sequence (e.g.~\cite[Theorem II.8.17 (2)]{MR0463157}):
    \begin{equation}\label{eq:cotangent_seq}
    0\longrightarrow N^\vee_{X/\mb{P}^n}\longrightarrow\Omega_{\mb{P}^n}|_X\longrightarrow\Omega_X\longrightarrow 0.
    \end{equation}
    In \cite{einniu,MP24}, the $m^\mathrm{th}$ fundamental forms are constructed using the Taylor series map applied to the bundles of principal part $\mathcal{P}_{X/k}^m(\mathcal{O}_X)$ \cite[D\'efinition (16.3.6)]{MR238860}, and using again the properties of the kernel. They suppose $k$ to be algebraically closed, but this hypothesis is not really necessary, at least in order to define the fundamental forms\footnote{Confirmed by Wenbo Niu by a personal communication.}. Our approach follows \cite[Appendix A]{MR1308020}.
\end{rem}

We now assume $X\subseteq\mb{P}^2$ is a curve and characterize when $\mc{F}\to X$ is relatively orientable. 

\begin{prop}\label{prop:relatively_ori}
    Let $X\subseteq\mb{P}^2$ be a smooth projective plane curve of degree $d$. The following are equivalent.
    \begin{enumerate}
        \item $\mc{F}$ is relatively orientable.
        \item $\mc{O}_X(1)$ is a square.
        \item $X$ admits a theta characteristic and $d$ is even.
    \end{enumerate}
\end{prop}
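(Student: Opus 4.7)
The plan is to reduce everything to a concrete computation of $\det\mc{F}$, using that $X$ is a curve to collapse the symmetric power to a tensor power, and then reduce questions about squares in $\Pic(X)$ to questions about $\mc{O}_X(1)$ alone.

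First, I would compute $\mc{F}$ as a line bundle. Since $X\subset\mb{P}^2$ is a smooth plane curve of degree $d$, the normal bundle is $N_{X/\mb{P}^2}\cong\mc{O}_X(d)$ (by the conormal sequence applied to the hypersurface defining equation), and adjunction gives $\omega_X\cong\mc{O}_X(d-3)$. Because $X$ is a curve, $\Omega_X$ is a line bundle, so $\Sym^2\Omega_X\cong\omega_X^{\otimes 2}$. Hence
\[\mc{F}\cong\mc{O}_X(d)\otimes\omega_X^{\otimes 2}\]
is a line bundle (in particular $\det\mc{F}=\mc{F}$), and I would compute
\[\det\mc{F}\otimes\omega_X\cong\mc{O}_X(d)\otimes\omega_X^{\otimes 3}\cong\mc{O}_X(d+3(d-3))\cong\mc{O}_X(4d-9).\]

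Next, I would establish the equivalence (1)$\Leftrightarrow$(2). Since $4d-9=2(2d-5)+1$, we have
\[\mc{O}_X(4d-9)\cong\mc{O}_X(2d-5)^{\otimes 2}\otimes\mc{O}_X(1),\]
so $\det\mc{F}\otimes\omega_X$ is a square in $\Pic(X)$ if and only if $\mc{O}_X(1)$ is a square. This is exactly the relative orientability criterion from Definition~\ref{defn:orientable_relative} (with $D=0$).

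Finally, I would prove (2)$\Leftrightarrow$(3). For (2)$\Rightarrow$(3): if $\mc{O}_X(1)\cong\mc{M}^{\otimes 2}$, taking degrees shows $d=2\deg\mc{M}$, so $d$ is even; then $\mc{M}^{\otimes(d-3)}$ squares to $\mc{O}_X(d-3)\cong\omega_X$, giving a theta characteristic. For (3)$\Rightarrow$(2): if $d=2e$ and $\Theta^{\otimes 2}\cong\omega_X\cong\mc{O}_X(2e-3)$, then the line bundle $\mc{L}:=\Theta\otimes\mc{O}_X(2-e)$ satisfies
\[\mc{L}^{\otimes 2}\cong\mc{O}_X(2e-3)\otimes\mc{O}_X(4-2e)\cong\mc{O}_X(1),\]
so $\mc{O}_X(1)$ is a square. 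The only mildly subtle point is noticing that $4d-9$ is always odd so that the square root question for $\det\mc{F}\otimes\omega_X$ reduces cleanly to that for $\mc{O}_X(1)$; the rest is formal manipulation in $\Pic(X)$.
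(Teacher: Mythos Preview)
Your proof is correct and follows essentially the same approach as the paper: both compute $\omega_X\otimes\mc{F}\cong\mc{O}_X(2d-5)^{\otimes 2}\otimes\mc{O}_X(1)$ using adjunction and $N_{X/\mb{P}^2}\cong\mc{O}_X(d)$, reducing relative orientability to $\mc{O}_X(1)$ being a square. Your treatment of (2)$\Leftrightarrow$(3) via the explicit square roots $\mc{M}^{\otimes(d-3)}$ and $\Theta\otimes\mc{O}_X(2-e)$ is a bit more direct than the paper's, which instead rewrites $\omega_X\otimes\mc{F}\cong\omega_X\otimes(\mc{O}_X(d/2)\otimes\omega_X)^{\otimes 2}$ when $d$ is even to read off the theta characteristic condition; but this is a cosmetic difference, not a different argument.
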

\begin{proof}
    Recall that $N_{X/\mb{P}^2}\cong\mc{O}_{\mb{P}^2}(d)|_X=\mc{O}_X(d)$. Moreover, from the cotangent exact sequence \eqref{eq:cotangent_seq}, we have
    \[\Omega_X\cong\det(\Omega_{\mb{P}^2})|_X \otimes N_{X/\mb{P}^2}\cong\mc{O}_X(d-3),\]
    thus $$\Omega_X\otimes\mc{F}\cong\Sym^3\Omega_X\otimes N_{X/\mb{P}^2}\cong  \mc{O}_X(2d-5)^{\otimes 2}\otimes\mc{O}_X(1).$$
    This implies that $\Omega_X\otimes\mc{F}$ is a square if and only if $\mc{O}_X(1)$ is the square of a line bundle $L$. Since $d$ is the degree of $\mc{O}_X(1)$, $2\deg L = d$ implies $d$ even. In this case, we have
    $$\Omega_X\otimes\mc{F}\cong\Omega_X\otimes
    \left(
    \mc{O}_X(d/2)\otimes\Omega_X
    \right)^{\otimes 2}.
    $$
    So, $\mc{F}$ is orientable if and only if $X$ admits a theta characteristic. 
\end{proof}

\begin{rem}
    In Proposition~\ref{prop:relatively_ori}, we cannot drop the hypothesis that $X$ admits a theta characteristic. For example, a non-singular real plane curve of degree $4$ with no real points has no theta characteristics \cite[Section~5]{MR286136}. See also \cite[Section~5]{MR631748} for a thorough discussion about theta characteristics of real curves, and \cite{MR2672429} for a characterization of the existence of theta characteristic in terms of the splitting of the stable motivic homotopy type of the curve. 
    If $k$ is algebraically closed, then any non-singular curve admits theta characteristics \cite[Section~4]{MR292836}.
\end{rem}

An advantage of working with $\mc{F}$ instead of $\mc{E}_m$ (from earlier in the paper) is that $\mc{F}\to X$ is relatively orientable in many cases where $\mc{E}_m\to\Phi$ is not. In order to extract enumerative information from $\mc{F}$, we have a few more tasks at hand. First, we need to check that the vanishing locus of $\II$ corresponds to the locus of points on $X$ at which there is a highly tangent line. Second, we need to compute the Euler number $e(\mc{F})\in\GW(k)$. Finally, we need to compute the local index of $\II$ at inflectional points.

\begin{defn}
    Let $X\subseteq\mb{P}^2$ be a smooth projective plane curve. Given $Y,Z\subseteq\mb{P}^2$ and a point $p\in Y\cap Z$, let $i_p(Y,Z)$ denote the intersection multiplicity of $Y$ and $Z$ at $p$. Let
    \[\inf(X)=\{x\in X:\text{there is a line }L\subseteq\mb{P}^2\text{ such that }i_x(X,L)\geq 3\}\]
    denote the locus of \emph{inflectional points} on $X$. Note that this is a geometric definition: $\inf(X)$ is defined over $k$ because $X$ is defined over $k$, but individual points of $\inf(X)$ may be defined over non-trivial extensions of $k$.
\end{defn}

By construction, the second fundamental form $\II$ is a linear system of bilinear forms on the tangent vector spaces $(T_{X,p})_{p\in X}$ parameterized by the conormal bundle $N^\vee_{X/\mb{P}^2}$. In complex differential geometry, the properties of the second fundamental form are exploited in the influential paper \cite{MR1273267}. As explained on \cite[p.~367]{MR559347}, for $k=\mathbb{C}$ the second fundamental form is degenerate at a point of $T_{X,p}$ if and only if that point spans a line of $\mb{P}^2$ meeting $X$ at $p$ with order at least 3. In other words, the zero locus of the global section $\II\colon X\to\Hom(N^\vee_{X/\mb{P}^2},\Sym^2\Omega_X)$ is precisely $\inf(X)$.  

For arbitrary fields note that the functor $\otimes_k \bar k$ is exact, so we may suppose $k$ algebraically closed. Indeed, Serre's intersection formula is preserved, see \cite[Appendix A]{MR0463157}. A geometric interpretation of the degenerate locus of $\II$ as the locus of inflectional points is given by \cite[Section~3]{MP24} (see also \cite[Appendix~B,~Theorem 2.3]{MR1308020} and \cite{MR506323}).

It is now straightforward to calculate the Euler number of $\mc{F}$ in the relatively orientable case.

\begin{lem}
    If $d$ is even and $X$ admits a theta characteristic, then $e(\mc{F})=\frac{3d(d-2)}{2}\mb{H}$.
\end{lem}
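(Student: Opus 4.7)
The plan is to reduce everything to a rank calculation by combining two facts already in play: Euler numbers of relatively oriented bundles over odd-dimensional smooth proper schemes are hyperbolic \cite[Proposition~19]{FourLines}, and the rank of such an Euler number agrees with the classical complex count.

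First I would identify $\mc{F}$ as a line bundle and compute its degree. Since $X$ is one-dimensional, $\Omega_X$ is a line bundle, so $\Sym^2\Omega_X = \Omega_X^{\otimes 2}$. Combining this with $N_{X/\mb{P}^2}\cong\mc{O}_X(d)$ and $\Omega_X\cong\mc{O}_X(d-3)$, both recalled in the proof of Proposition~\ref{prop:relatively_ori}, I get $\mc{F}\cong\mc{O}_X(3(d-2))$. Since $\mc{O}_X(1)$ has degree $d$ on a degree $d$ plane curve, $\deg\mc{F} = 3d(d-2)$.

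Next, the hypotheses on $d$ and on the existence of a theta characteristic ensure relative orientability of $\mc{F}$ via Proposition~\ref{prop:relatively_ori}, so $e(\mc{F})\in\GW(k)$ is defined. The odd-dimensionality of $X$ then forces $e(\mc{F}) = N\cdot \mb{H}$ for some integer $N$. Applying the rank homomorphism $\GW(k)\to\mb{Z}$ and recalling that the rank of the enriched Euler class coincides with the complex Euler number, which for a line bundle on a smooth proper curve is just its degree, I obtain $2N = 3d(d-2)$.

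The argument is essentially a direct analogue of the earlier proposition computing $e(\mc{E}_m)$ via hyperbolicity, so I do not anticipate a serious obstacle; the only thing to keep in mind is the parity bookkeeping, which is automatic from the hypotheses: $d$ even makes $3d(d-2)/2$ an integer, so $N = \frac{3d(d-2)}{2}$, yielding the claimed formula.
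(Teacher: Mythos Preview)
Your proof is correct and follows essentially the same approach as the paper: both use the hyperbolicity result \cite[Proposition~19]{FourLines} for bundles over odd-dimensional schemes and then compute the rank via the complex count. The only minor difference is that you compute $\deg\mc{F}=3d(d-2)$ directly from the identification $\mc{F}\cong\mc{O}_X(3(d-2))$, whereas the paper cites the classical count of inflection points on a smooth plane curve \cite[Exercise~IV.2.3]{MR0463157}; these are equivalent ways of obtaining the same number.
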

\begin{proof}
    Since $\mc{F}\to X$ is a line bundle over a curve, its Euler number is hyperbolic by \cite[Proposition 19]{FourLines}. It thus suffices to compute the rank of $e(\mc{F})$, which is given by the integral Euler number over an algebraically closed field. The claim now follows from the classical count of $3d(d-2)$ inflectional points to a smooth complex plane curve of degree $d$ \cite[Exercise IV.2.3]{MR0463157}.
\end{proof}

To conclude, we prove Theorem~\ref{thm:fundamental form} by computing the local index of $\II$. This will be given in terms of the \emph{third fundamental form}
\[\III\colon R_3\longrightarrow\Sym^3\Omega_X,\]
where $R_3$ is the sheaf associated to the module $\ker(\delta_2)$.

\begin{proof}[Proof of Theorem~\ref{thm:fundamental form}]
    The latter equation follows from the first by the Poincar\'e--Hopf Theorem (see e.g.~\cite[Theorem~1.1]{BW23}) and the fact that $\langle 3\rangle\mb{H}=\mb{H}$, so it remains to treat the first equation. 
    Using the same notation at the beginning of this section, assume $X=\Spec(A)$ is localized at a smooth $k$-point. The ideal $I$ is free of rank one, so it is generated by $\d z:=z\otimes 1-1\otimes z$ for some $z\in A$. There exists an isomorphism
    \[(A\otimes_k A)/I^{3}\cong A^3\]
    sending $(a\otimes b)$ to $a(b, \partial_z b, \partial_z^2 b, \partial_z^3 b)$, where $\partial_zb$ is defined as that element of $A$ such that
    \[(b\otimes 1 - 1\otimes b)^m= \partial_z^mb(\d z)^m.\]
    In our case, $\partial_z^mb=D_z^{(m)}b$ are the Hasse derivatives of the regular element $b$ with respect to the local parameter $z$ (see \cite[Note~6.4.2]{MR1308020}). Thus, in this particular open subset, the second and third fundamental forms are given by $(\partial_z^2b) \d z\otimes \d z$ and $(\partial_z^3b)\d z\otimes\d z\otimes\d z$, respectively. Since $\frac{\partial}{\partial z}D^{(n)}_z=(n+1)D^{(n+1)}_z$, the Jacobian of $\II$ on this open subset is given by
    \[\frac{\partial}{\partial z}((\partial^2_z b)\d z\otimes\d z)=3(\partial^3_z b)\d z\otimes\d z\otimes\d z.\]
    In particular, we have $\Jac(\II)|_p=3\cdot\III(p)$.
    
    Recall that the zero locus of $\II$ is the locus of inflectional points $\inf(X)$. Because $X$ is a smooth general curve, $\inf(X)$ is zero dimensional and reduced. This implies that
    \begin{equation}\label{eq:II}
    \ind_p\II=\Tr_{k(p)/k}\langle\Jac(\II)|_{p}\rangle
    \end{equation}
    (see e.g.~\cite[p.~692]{EnrichedCubicSurface}). By base changing to $k(p)$, working $k(p)$-rationally, and then applying the field trace, we may assume (without loss of generality) that all zeros of $\II$ are $k$-rational. 
    
    Equation~\eqref{eq:II} only makes sense once we have chosen a Nisnevich coordinate on $X$ and a compatible local trivialization of $\mc{F}$. We will explain why our prior argument is independent of our choice of coordinate and trivialization. By \cite[Proposition~20]{EnrichedCubicSurface}, we always have Nisnevich coordinates $(U,\vphi)$ about $p$, where $\vphi\colon U\to\mb{A}^1$. By intersecting with our affine neighborhood $\Spec(A)$, we may assume that $U=\Spec(A)$. The map $\vphi$ corresponds to a local parameter $t$ of $\Spec(A)$, and $\d{t}:=t\otimes 1-1\otimes t$ will generate the ideal $I$. We may therefore take $z$ to be the local parameter determined by our Nisnevich coordinate. Finally, one can always obtain a trivialization of $\mc{F}$ compatible with $(U,\vphi)$ by \cite[Proposition~5.5]{McK22}. In the present case, this can be done by picking a non-vanishing local section of the line bundle $\mc{F}$ and rescaling if necessary.
\end{proof}

\begin{rem}
Higher fundamental forms for algebraic varieties made their first appearance in work of Griffiths and Harris \cite{MR559347}, and were later generalized by several authors \cite{Teg92,landsbergseoul,einniu,MP24}. It would be interesting to try extending the results of this section to hypersurfaces in higher-dimensional projective spaces. One of the first hurdles to such a generalization is that the sheaves used to define higher fundamental forms need not be vector bundles.
\end{rem}

\bibliography{highly-tangent}
\bibliographystyle{amsalpha}

\end{document}